\documentclass[12pt,reqno]{amsart}
\usepackage{fullpage}

\usepackage{amsfonts,amscd}
\usepackage{amssymb}
\usepackage{url}
\usepackage[english]{babel}


\theoremstyle{plain}
\newtheorem{theorem}                {Theorem}      [section]
\newtheorem{proposition}  [theorem]  {Proposition}

\newtheorem{lemma}        [theorem]  {Lemma}

\theoremstyle{definition}
\newtheorem{example}      [theorem]  {Example}
\newtheorem{remark}       [theorem]  {Remark}

\newtheorem{problem}        [theorem]  {Problem}

\newcommand{\secref}[1]{\S\ref{#1}}
\setlength{\parindent}{0.0cm}

\numberwithin{equation}{section}

\def \R{{\mathbb R}}
\def \s{{\mathbb S}}

\def \C{{\mathbb C}}
\def \H{{\mathbb H}}

\usepackage{color}

\DeclareMathOperator{\grad}{grad}

\DeclareMathOperator{\Div}{div}
\DeclareMathOperator{\ricci}{Ricci}

\numberwithin{equation}{section}

\begin{document}

\title[Reduction methods for the bienergy]{Reduction methods for the bienergy}

\author{S.~Montaldo}
\address{Universit\`a degli Studi di Cagliari\\
Dipartimento di Matematica e Informatica\\
Via Ospedale 72\\
09124 Cagliari, Italia}
\email{montaldo@unica.it}

\author{C.~Oniciuc}
\address{Faculty of Mathematics\\ ``Al.I. Cuza'' University of Iasi\\
Bd. Carol I no. 11 \\
700506 Iasi, ROMANIA}
\email{oniciucc@uaic.ro}

\author{A.~Ratto}
\address{Universit\`a degli Studi di Cagliari\\
Dipartimento di Matematica e Informatica\\
Viale Merello 93\\
09123 Cagliari, Italia}
\email{rattoa@unica.it}

\begin{abstract}
This paper, in which we develop ideas introduced in \cite{MR}, focuses on \emph{reduction methods} (basically, group actions or, more generally, simmetries) for the bienergy. This type of techniques enable us to produce examples of critical points  of the bienergy by reducing the study of the relevant fourth order PDE's system to ODE's. In particular, we shall study rotationally symmetric biharmonic conformal diffeomorphisms between \emph{models}. Next, we will adapt the reduction method to study an ample class of $G-$invariant immersions into the Euclidean space. At present, the known instances in these contexts are far from reaching the depth and variety of their companions which have provided fundamental solutions to classical problems in the theories of harmonic maps and minimal immersions. However, we think that these examples represent an important starting point which can inspire further research on biharmonicity. In this order of ideas, we end this paper with a discussion of some open problems and possible directions for further developments.
\end{abstract}

\subjclass[2010]{53A07, 53C42, 58E20}

\keywords{Biharmonic maps, biharmonic immersions, transformation groups, equivariant differential geometry, cohomogeneity one hypersurfaces, models}

\thanks{The second author was supported by a grant of the Romanian National Authority for Scientific Research and Innovation, CNCS -- UEFISCDI, project number PN-II-RU-TE-2014-4-0004}

\maketitle

\section{Introduction}\label{intro}
According to B.-Y. Chen \cite{Chen} an isometric immersion $\varphi:M^m\hookrightarrow\R^n$ is {\it biharmonic} if
\begin{equation}\label{def-chen}
\Delta H =(\Delta H_1,\ldots, \Delta H_n)=0 \,\, ,
\end{equation}
where $H=(H_1,\ldots,H_n)$ is the \emph{mean curvature vector field} and $\Delta$ denotes the Beltrami-Laplace operator on $M$ (our sign convention is such that $\Delta h=-h''$ when $h$ is a function of one real variable).
It follows from Beltrami's equation
$$
m\, H=-(\Delta\, \varphi_1,\ldots,\Delta\, \varphi_n)
$$
that the biharmonicity condition is equivalent to
$$
\Delta^{2}\, \varphi=(\Delta^{2}\, \varphi_1,\ldots,\Delta^{2}\, \varphi_n)=0\,\,,
$$
a fact which justifies Chen's definition of biharmonic immersions.
The study of biharmonic immersions in $\R^n$ can be set in a more general variational context in Riemannian geometry. To the purpose of describing this setting, we first recall that
{\it harmonic maps} (\cite{ES}) are the critical points of the {\em energy} functional
\begin{equation}\label{energia}
E(\varphi,D)=\frac{1}{2}\int_{D}\,|d\varphi|^2\,dv_g \,\, ,
\end{equation}
for any compact domain $D$ with smooth boundary of an orientable manifold $M$,
where $\varphi:(M,g)\to(N,h)$ is a smooth map between two Riemannian
manifolds. In analytical terms, the condition of harmonicity is equivalent to the fact that the map $\varphi$ is a solution on $M$ of the Euler-Lagrange equation associated to the energy functional \eqref{energia}, i.e.
\begin{equation}\label{harmonicityequation}
    {\rm trace} \, \nabla d \varphi =0 \,\, .
\end{equation}
The left member of \eqref{harmonicityequation} is a vector field along the map $\varphi$, or, equivalently, a section of the pull-back bundle $\varphi^{-1} \, (TN)$: it is called {\em tension field} and denoted $\tau (\varphi)$. In a local chart the tension field is given by
\begin{equation}\label{tensionfield}
    \tau^\gamma (\varphi)= g^{ij} \left ( \nabla(d \varphi) \right )_{ij}^\gamma \,\, ,
\end{equation}
where the explicit expression for the second fundamental form is
\begin{equation}\label{secondaformalocale}
    \left ( \nabla(d \varphi) \right )_{ij}^\gamma = \frac{\partial^2 \varphi^\gamma}{\partial x^i \, \partial x^j} - {}^M\Gamma_{ij}^k \, \frac{\partial \varphi^\gamma}{\partial x^k} \, + \, {}^N\Gamma_{\alpha \beta}^\gamma \, \frac{\partial \varphi^\alpha}{\partial x^i} \, \frac{\partial \varphi^\beta}{\partial x^j} \,\, .
\end{equation}
In \eqref{secondaformalocale}, ${}^M\Gamma$ and ${}^N\Gamma$ denote the Christoffel symbols of the Levi-Civita connections of $(M,g)$ and $(N,h)$ respectively. Also, note that Einstein's convention of sum over repeated indices is adopted. In particular, inspection of \eqref{secondaformalocale} reveals that the harmonicity equation \eqref{harmonicityequation} is a second order, elliptic system of partial differential equations.
Every vector field ${\mathcal V}$ along $\varphi$, supported in $D$,  defines a variation of $\varphi$, supported in $D$, by setting
\begin{equation}\label{variation}
    \varphi_t (p)= \exp_{\varphi(p)}\left( t\,{\mathcal V}(p)\right) \,\, , \qquad t\in(-\varepsilon ,\varepsilon)
\end{equation}
(note that $\varphi_0=\varphi$). Then, for such vector field ${\mathcal V}$, we have:
\begin{equation}\label{decrescenzarapidaenergia}
  \nabla_{\mathcal V} \, E(\varphi)= \left . \frac{d\,E(\varphi_t) }{dt}\right |_{t=0} =-\, \int_{D}\,\langle\tau(\varphi) ,\, {\mathcal V}\rangle\,dv_g \,\, .
\end{equation}
In particular, it follows that \eqref{harmonicityequation} is equivalent to the vanishing of the directional derivative in \eqref{decrescenzarapidaenergia} for all ${\mathcal V}$. Moreover, we point out that \eqref{decrescenzarapidaenergia} implies the fact that the tension field $\tau (\varphi)$ provides the direction in which the energy decreases more rapidly.

The study of harmonic maps is a very wide area of research, involving a rich interplay of geometry,
analysis and topology. We refer to \cite{PBJCW,JEELLE1,JEELLE2,Xin} for notation and
background on harmonic maps and to \cite{BMBib} for a more recent bibliography.

Now, we are in the right position to introduce the topic which is the main object of this paper, i.e., the notion of a {\it biharmonic map}. More precisely, biharmonic maps, which provide a natural generalisation of harmonic maps, are the critical points of the bienergy functional (as suggested by Eells--Lemaire \cite{EL83})
\begin{equation}\label{bienergia}
    E_2(\varphi,D)=\frac{1}{2}\int_{D}\,|\tau (\varphi)|^2\,dv_g \,\, .
\end{equation}
In \cite{Jiang} Jiang derived the first variation and the second variation formulas for the bienergy. In particular, he showed that the Euler-Lagrange equation associated to $E_2(\varphi)$ is
\begin{equation}\label{bitensionfield}
    \tau_2(\varphi) = - J\left (\tau(\varphi) \right ) = - \triangle \tau(\varphi)- \rm{trace} R^N(d \varphi, \tau(\varphi)) d \varphi = 0 \,\, ,
    \end{equation}
where $J$ is the Jacobi operator of $\varphi$, $\triangle$ is the rough Laplacian defined on sections of $\varphi^{-1} \, (TN)$ and
\begin{equation}\label{curvatura}
    R^N (X,Y)= \nabla_X \nabla_Y - \nabla_Y \nabla_X -\nabla_{[X,Y]}
\end{equation}
is the curvature operator on $(N,h)$.

In this context, the biharmonic version of \eqref{decrescenzarapidaenergia} is (see\cite{Jiang})
\begin{equation}\label{decrescenzarapidabienergia}
  \nabla_{\mathcal V} \, E_2(\varphi)= \left . \frac{d\,E_2(\varphi_t) }{dt}\right |_{t=0} = \int_{D}\,\langle  \tau_2(\varphi) ,\,{\mathcal V}\rangle \,dv_g \,\, .
\end{equation}
Therefore, \eqref{bitensionfield} represents the vanishing of the directional derivative in \eqref{decrescenzarapidabienergia} for all ${\mathcal V}$. In particular, the bitension field $\tau_2 (\varphi)$ provides the direction in which the bienergy varies more rapidly.

We point out that \eqref{bitensionfield} is a {\it fourth order} semi-linear elliptic system of differential equations. Fourth order differential equations are of great importance in various fields. By way of example, we cite an instance which is well-known to civil engineers: the structural problem of a beam resting on an elastic soil is amenable to the following differential equation, in which the unknown function $Y(x,t)$ represents the response of the beam in the position $x$ at the time $t$:
\begin{equation}\label{trave}
    \frac{\partial^2}{\partial x^2} \left( EI(x) \, \frac{\partial^2 \, Y }{\partial x^2}\right) + m \, \frac{\partial^2 \, Y}{\partial t^2} + k \, Y= f(x,t) \,\, ,
\end{equation}
where $EI(x)$ measures the flexural rigidity (Young modulus), $m$ and $k$ are two constants depending respectively on the material of the beam and on the elasticity of the soil; the right member $f(x,t)$ represents the external load (see \cite{Corradi} for details).

First, let us note that any harmonic map is an absolute minimum of the bienergy, and so it is trivially biharmonic. Therefore, a general working plan is to study the existence of those biharmonic maps which are not harmonic: these will be refered to as {\it proper biharmonic maps}.
We refer to \cite{Montaldo} for an introduction to biharmonicity, existence results and general properties of biharmonic maps. \\

Now, we can make explicit the generalization of Chen's definition of a biharmonic submanifold into the Euclidean space. Indeed, a submanifold into a Riemannian manifold $(N,h)$, that is an isometric immersion $\varphi:M\hookrightarrow (N,h)$,  is called a {\it biharmonic submanifold} if $\varphi$ is a biharmonic map.  In particular, minimal submanifolds are trivially biharmonic, so that we call {\it proper} biharmonic any biharmonic submanifold which is not minimal. We observe that when $N=\R^n$ the curvature term in \eqref{bitensionfield} vanishes and the equation $\tau_2(\varphi) =0$ is equivalent to \eqref{def-chen}. Therefore, the definition of biharmonic  submanifolds extends the original definition of Chen. \\

In the important, special case of hypersurfaces, biharmonicity can be expressed by means of the following general result (see \cite{BMO13,C84,LM08,O02,O10}):

\begin{theorem}\label{th: bih subm N}
Let  $\varphi:M^{n-1}\hookrightarrow N^{n}$ be an isometric immersion with mean curvature vector field $H=(f/(n-1))\,\eta$. Then $\varphi$ is biharmonic if and only if the normal and the tangent components of $\tau_2(\varphi)$  vanish,  i.e.,
\begin{subequations}
\begin{equation}\label{eq: caract_bih_normal}
\Delta {f}+f |A|^2- f\ricci^N(\eta,\eta)=0
\end{equation}
and
\begin{eqnarray}\label{eq: caract_bih_tangent}
2 A(\grad f)+  f \grad f-2 f \ricci^N(\eta)^{\top}=0
\end{eqnarray}
\end{subequations}
respectively, where $A$ is the shape operator and $\ricci^N(\eta)^{\top}$ is the tangent  component of the Ricci tensor field of $N$ in the direction of the unit normal vector field $\eta$ of $M$ in $N$.
\end{theorem}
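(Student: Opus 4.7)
The plan is to compute $\tau_2(\varphi)$ directly and split the result into its components normal and tangent to $M$. Since $\varphi$ is an isometric immersion, $\tau(\varphi)=(n-1)H=f\eta$, so \eqref{bitensionfield} reads
\[
\tau_2(\varphi)=-\triangle(f\eta)-f\,\trace R^N(d\varphi,\eta)d\varphi,
\]
and the task reduces to identifying the normal and tangential parts of the two summands on the right.

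Working at a point $p\in M$ in a local orthonormal frame $\{e_i\}_{i=1}^{n-1}$ tangent to $M$ and geodesic at $p$, the Weingarten formula $\nabla^N_X\eta=-A(X)$ gives $\nabla^\varphi_{e_i}(f\eta)=(e_if)\eta-fA(e_i)$. A second differentiation, combined with the Gauss formula $\nabla^N_X Y=\nabla^M_X Y+\langle AX,Y\rangle\eta$ applied to the tangential vector $A(e_i)$, yields after summation
\[
-\triangle(f\eta)=-(\Delta f+f|A|^2)\,\eta-2A(\grad f)-f\sum_{i}(\nabla^M_{e_i}A)(e_i),
\]
where the paper's sign convention $\Delta h=-\sum_i e_i e_i h$ at $p$ is used. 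The curvature contribution $-f\sum_i R^N(e_i,\eta)e_i$ splits, via the standard identities $\sum_i\langle R^N(e_i,\eta)e_i,\eta\rangle=-\ricci^N(\eta,\eta)$ and $\sum_i(R^N(e_i,\eta)e_i)^{\top}=-\ricci^N(\eta)^{\top}$, into a normal piece $f\,\ricci^N(\eta,\eta)\eta$ and a tangent piece $f\,\ricci^N(\eta)^{\top}$.

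To treat the remaining $\sum_i(\nabla^M_{e_i}A)(e_i)$ I invoke the Codazzi identity for the hypersurface,
\[
(\nabla^M_X A)Y-(\nabla^M_Y A)X=-(R^N(X,Y)\eta)^{\top}.
\]
Contracting over the first slot, using the symmetry of $\nabla A$ and $\trace A=f$, rewrites this sum as $\grad f-\ricci^N(\eta)^{\top}$. Substituting, the tangent component of $\tau_2(\varphi)$ becomes $-2A(\grad f)-f\grad f+2f\,\ricci^N(\eta)^{\top}$ and its normal component becomes $-(\Delta f+f|A|^2-f\,\ricci^N(\eta,\eta))\eta$. Since $\tau_2(\varphi)=0$ if and only if both orthogonal components vanish, this produces \eqref{eq: caract_bih_normal} and \eqref{eq: caract_bih_tangent}.

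The main obstacle is sign bookkeeping: the coefficient $-2$ multiplying $\ricci^N(\eta)^{\top}$ in \eqref{eq: caract_bih_tangent} is the sum of two same-sign contributions, one produced by Codazzi inside $-\triangle(f\eta)$ and the other by the tangent component of the curvature term in $\tau_2(\varphi)$, which must add (not cancel) in order to give the stated factor $2$. Once the conventions for the shape operator, the rough Laplacian, the Ricci tensor and Codazzi's identity are fixed in a mutually consistent way, the remainder of the argument is a routine matching of coefficients.
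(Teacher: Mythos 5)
Your proof is correct, and since the paper itself does not prove Theorem \ref{th: bih subm N} but merely quotes it from the cited references, your direct computation is exactly the standard derivation given there: write $\tau(\varphi)=f\eta$, expand $-\triangle(f\eta)$ with the Gauss and Weingarten formulas, handle $\sum_i(\nabla^M_{e_i}A)(e_i)$ via the contracted Codazzi identity, and split the curvature trace into its normal and tangential Ricci contributions. All the signs check out against the paper's conventions for $\Delta$, $R^N$ and $A$, including the doubling of the $\ricci^N(\eta)^{\top}$ term from the two same-sign sources you identify.
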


As a general fact, we have to say that we encounter enormous technical difficulties to study \emph{fourth order} differential equations. In particular, the presently known examples of biharmonic maps have essentially been obtained by means of geometric intuition and simplification (see, for instance, \cite{BMO1,BMO2,MOR-JJAA}).
In this order of ideas, {\it equivariant theory} (or, {\it reduction theory}) deals with special families of maps having enough symmetries to guarantee that the original PDE's problem reduces to the study of certain ODE's. In particular, the study of certain harmonic maps may reduce to that of a certain second order {\it ordinary} differential equation (we refer to \cite{ER, Xin} for background and examples), while the search for $G-$invariant, cohomogeneity one minimal submanifolds becomes equivalent to proving the existence of suitable solutions to a second order ODE's system (see \cite{Hsiang}, for instance).

In this paper we shall illustrate how the reduction framework can be adapted to include the study of biharmonic maps. Although the study of the resulting
fourth order ordinary differential equations remains, in general, a very difficult task, one of our aims is to provide analysts with a direct approach
which permits them to compute the relevant equations avoiding the heavy burden of dealing with an advanced riemannian
geometric machinery.

Our paper is organized as follows: in \secref{sezvarapproach} we describe a rather general approach to reduction for biharmonic maps. In \secref{mappe-tra-modelli} we illustrate how these methods can be applied to the study of a family of equivariant biharmonic maps. Next, in \secref{G-invariant-immersions} we illustrate some applications of the reduction method to the study of large families of $G-$invariant submanifolds. Finally, in \secref{open-problems} we state some open problems and indicate some directions for possible developments.

\section{A reduction method for biharmonic maps}\label{sezvarapproach}

In order to introduce the variational context in which we work, let us first recall some basic facts concerning equivariant maps and their associated
(1-dimensional) variational problems. Various, rather general approaches are possible (see, for example, \cite{PBJCW, ER,Xin}): here we adopt the setting of \cite{Xin}, as developped in \cite{MR}. In particular, we shall consider maps $\varphi$ which are \emph{equivariant with respect to Riemannian submersions}. That amounts to require that the following be a commutative diagram:
\begin{equation}\label{diagrammacommutativo}
\begin{CD}
M @>\varphi>>N\\
@V \pi_1VV @V V\pi_2V\\
\bar{M} @>\bar{\varphi}>> \bar{N}
\end{CD}
\end{equation}
where $\pi_1 : M \to \bar{M}$ and $\pi_2 : N \to \bar{N}$ are Riemannian submersions.
We say that a vector field ${\mathcal V}$ along $\varphi$ is \emph{basic} if: for any $p\in M$, ${\mathcal V}(p)$ is horizontal with respect to $\pi_2$ and there exists a vector field $\bar{{\mathcal V}}$ along $\bar{\varphi}$ such that
\begin{equation}\label{basicvectorfield}
d\pi_2({\mathcal V}(p))=\bar{{\mathcal V}}(\pi_1(p)) \,\, .
\end{equation}
The following lemma was obtained by Xin (see Theorem 6.5 of \cite{Xin}) for tension field, harmonic maps and energy. The adaptation to the biharmonic case was proved in \cite{MR}:
\begin{lemma}\label{adaptationofXin}
Let $\varphi:M \to N $ be an equivariant map with respect to Riemannian submersions.  If its tension field
$\tau$ is a basic vector field, then $\varphi$ is harmonic if and only if it is a critical point of the energy with respect to all equivariant variations.

If its bitension field
$\tau_2$ is a basic vector field, then $\varphi$ is biharmonic if and only if it is a critical point of the bienergy with respect to all equivariant variations.
\end{lemma}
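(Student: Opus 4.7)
The plan is to establish both statements by the same scheme, so I will focus on the biharmonic claim; the harmonic one follows by replacing $\tau_2$ with $\tau$ and $E_2$ with $E$ throughout, and is essentially Xin's Theorem 6.5 of \cite{Xin}. The ``only if'' direction is immediate from the first variation formula \eqref{decrescenzarapidabienergia}: if $\tau_2(\varphi)=0$, then $\nabla_{\mathcal V} E_2(\varphi)=0$ for every vector field ${\mathcal V}$ along $\varphi$, and in particular for every equivariant one, so no use of the basicness of $\tau_2$ is required here.

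For the nontrivial ``if'' direction I would start again from \eqref{decrescenzarapidabienergia}, written for a generic compact domain $D\subset M$:
$$
\nabla_{\mathcal V} E_2(\varphi) \;=\; \int_D \langle \tau_2(\varphi),{\mathcal V}\rangle \, dv_g .
$$
Interpreting ``equivariant variation'' as one whose variation vector field ${\mathcal V}$ is basic in the sense of \eqref{basicvectorfield}, the hypothesis says that this integral vanishes for every compactly supported basic ${\mathcal V}$. The goal is then to promote this family of vanishings to the pointwise identity $\tau_2(\varphi)=0$, and the basicness of $\tau_2(\varphi)$ is precisely what makes such an upgrade possible.

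The key step is to push the integral identity down to $\bar M$. Both $\tau_2(\varphi)$ and ${\mathcal V}$ are horizontal with respect to $\pi_2$ and project under $d\pi_2$ to well-defined sections $\bar\tau_2$ and $\bar{\mathcal V}$ along $\bar\varphi$; since $d\pi_2$ restricts to a linear isometry on horizontal vectors, the integrand $\langle \tau_2(\varphi),{\mathcal V}\rangle$ is constant on the fibres of $\pi_1$ and equals $\langle \bar\tau_2,\bar{\mathcal V}\rangle\circ\pi_1$. Applying fibre integration for the Riemannian submersion $\pi_1$ turns the vanishing integral into
$$
\int_{\pi_1(D)} \langle \bar\tau_2,\bar{\mathcal V}\rangle \, W \, dv_{\bar g} \;=\; 0,
$$
with $W>0$ the fibre-volume weight. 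As $\bar{\mathcal V}$ ranges over a sufficiently rich family of compactly supported test sections along $\bar\varphi$, the fundamental lemma of the calculus of variations forces $\bar\tau_2\equiv 0$; horizontality of $\tau_2(\varphi)$ and the isometric character of $d\pi_2$ on horizontal spaces then yield $\tau_2(\varphi)=0$, i.e.\ biharmonicity.

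The main technical obstacle I anticipate is the lifting step underlying the appeal to the fundamental lemma: given an arbitrary compactly supported section $\bar{\mathcal V}$ along $\bar\varphi$, one must produce a compactly supported basic ${\mathcal V}$ along $\varphi$ projecting to it. When $\pi_1$ has compact fibres this is a routine horizontal-lift-plus-partition-of-unity construction, but in the general setting some care is required, and I would simply invoke the framework for equivariant maps with respect to Riemannian submersions developed in \cite{Xin} and adapted to the biharmonic setting in \cite{MR} rather than reproving it from scratch.
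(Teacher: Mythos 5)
Your argument is correct and is essentially the approach behind the cited result: the paper itself gives no proof of this lemma, deferring to Xin's Theorem~6.5 and to \cite{MR}, where exactly your scheme is used --- the first variation formula, the observation that for basic $\tau_2$ and basic ${\mathcal V}$ the integrand $\langle\tau_2(\varphi),{\mathcal V}\rangle$ descends to $\bar M$ because $d\pi_2$ is isometric on horizontal spaces, fibre integration, and the fundamental lemma applied to horizontal lifts of arbitrary test sections along $\bar\varphi$. The only delicate point, compact support of the lifted field when the fibres of $\pi_1$ are non-compact, is correctly flagged by you and is harmless in all the applications of the paper, where the fibres are compact orbits.
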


In order to apply this setting to the construction of harmonic and biharmonic maps, we first restrict our attention to the case that both the base manifolds in \eqref{diagrammacommutativo} are 1-dimensional. More specifically, we consider equivariant maps $\varphi_{\alpha}:(M,g)\to(N,h)$ and denote by
\begin{equation}\label{alfa}
\alpha :I \to J,\qquad  I,J\subseteq \R\quad \text{open intervals},
\end{equation}
the associated map between the base manifolds. Note that this function $\alpha$ may have to satisfy suitable boundary conditions dictated by the geometry of the problem.  If these maps have enough symmetries (in particular, to ensure that the tension field is basic) (see \cite{ER} for details), then the energy functional \eqref{energia} takes the following form:
\begin{equation}\label{energiaridotta}
    E(\varphi_\alpha)= \int_a^b \,\, L(t, \alpha, \dot{\alpha}) \, dt \,\, ,
\end{equation}
where $[a,b]$ is an arbitrary compact interval of $I$, $\dot{\alpha}$ denotes derivative with respect to $t$, and $L(\cdot, \cdot, \cdot)$ is a differentiable function depending on the geometry of the problem under consideration. Now, according to Lemma~\ref{adaptationofXin} (see \cite{Xin}), the harmonicity of the map $\varphi_{\alpha}$ is equivalent to the fact that $\alpha $ is a {\it critical point} of the so-called reduced energy functional \eqref{energiaridotta}. That is to say, the function $\alpha$ must be a solution on $I$ of the Euler-Lagrange equation associated with \eqref{energiaridotta}, which is the following:
\begin{equation}\label{eqeulero1}
    \frac{\partial L}{\partial \alpha} - \frac{d}{dt} \,  \frac{\partial L}{\partial \dot{\alpha}}  \, = \, 0 \,\, .
\end{equation}
For future reference, it is useful to recall how equation \eqref{eqeulero1} is derived from \eqref{energiaridotta}. The first step in this direction is to recognize that the requirement that $\alpha$ be critical is equivalent to the vanishing of the directional derivative
\begin{equation}\label{derivdir}
    \frac{d}{dh} \, \left [  E(\,\varphi_{(\alpha \, +\, h\, \beta) }\,)\,\right ] |_{h=0}
    \end{equation}
for all $\beta:I\to\R$ which vanish outside $(a,b)$, where $h\in(-\varepsilon,\varepsilon)$ for a suitable $\varepsilon>0$. Next, we compute \eqref{derivdir} explicitly:
\begin{eqnarray} \label{calcolo1}
\nonumber
  \frac{d}{dh} \, \left [  E(\,\varphi_{(\alpha \, +\, h\, \beta) }\,)\, \right ] |_{h=0} &=& \frac{d}{dh}\left ( \int_a^b \,\, L(t, \alpha+h\beta, \dot{\alpha}+h\dot{\beta})\, dt  \right )\Big {|}_{h=0} \\ \nonumber
   &=& \int_a^b \,\left (\frac{d}{dh}\, L(t, \alpha+h\beta, \dot{\alpha}+h\dot{\beta})|_{h=0} \right )\, dt  \\ \nonumber
   &=& \int_a^b \,\left (  \frac{\partial L}{\partial \alpha} \, \beta +  \frac{\partial L}{\partial \dot{ \alpha}} \, \dot{\beta}\right ) \, dt \\
   &=& \int_a^b \left [ \left ( \frac{\partial L}{\partial \alpha} - \frac{d}{dt} \,  \frac{\partial L}{\partial \dot{\alpha}} \right ) \, \beta \right ] \, dt \,\, ,
\end{eqnarray}
where, in order to obtain the fourth equality of \eqref{calcolo1}, we have used the fact that, since $\beta$ vanishes in $a$ and $b$:
\begin{equation}\label{supportocompatto}
0= \int_a^b \,\frac{d}{dt}\left (\frac{\partial L}{\partial \dot{\alpha}} \, \beta \right ) \, dt = \int_a^b \,\left (\frac{d}{dt}\, \frac{\partial L}{\partial \dot{\alpha}}  \right ) \, \beta\, \, dt + \int_a^b \, \frac{\partial L}{\partial \dot{\alpha}} \,\, \dot{\beta} \, \,dt \,\,.
\end{equation}
By way of summary, we conclude from \eqref{calcolo1} that the vanishing of the directional derivative \eqref{derivdir}, for all $[a,b]$ and for all variations $\{\alpha \, +\, h\, \beta\}_h$ of $\alpha$ supported in $[a,b]$, is equivalent to the Euler-Lagrange equation \eqref{eqeulero1}.
\\

Now, it is useful to illustrate in detail an important example.
\begin{example}\label{esempio-mappe-rotaionally-simmetric-models}
We describe equivariant maps between \emph{models} in the sense of \cite{GW} (see also \cite{Petersen}). More precisely, an $m$-dimensional manifold $(M^m(o),\,g)$ with a pole $o$ is a model if and only if every linear isometry of $T_oM$ can be realized as the differential at $o$ of an isometry of $M$. A significant geometric property of a model is the fact that we can describe it by means of geodesic polar coordinates centered at the pole $o$, as follows:
\begin{equation}\label{model}
(M^m(o),\,g) = \left (\,\s^{m-1} \times [0,\, + \infty) , \, f^2(r)\, g_{\s^{m-1}} \, + \, dr^2 \,\right ) \,\, ,
\end{equation}
where
$$ (\,\s^{m-1},\, g_{\s^{m-1}} \, )
$$
denotes the $(m-1)$-dimensional Euclidean unit sphere, and the function $f(r)$ is a smooth function which satisfies
\begin{equation}\label{condizioni-su-f}
\left \{
  \begin{array}{l}
    f(0)=0 \,, \quad f'(0)=1 \quad {\rm and}\quad f(r)>0 \quad {\rm if} \,\, r>0 \,\, ; \\
    \,\\
    f^{(2k)}(0)=0 \quad {\rm for} \,\, {\rm all } \,\, k \geq 1 \,\, .\\
  \end{array}
\right .
\end{equation}
We also note that $r$ measures the geodesic distance from the pole $o$. To shorten notation and emphasize the role of the function $f$, we shall write $M_f^m(o)$ to denote a model as in \eqref{model}.

\begin{remark}\label{remarksuR^meH^m} We observe that, if $f(r)=r$, then $M_f^m(o)=\R^m$. We also point out that, if $f(r)= (1\slash c) \, \sinh (r \,c)$ ($c>0$), then $M_f^m(o)$ represents $H^m(-c^2)$, i.e., the $m$-dimensional hyperbolic space of constant sectional curvature $-\, c^2$. With a slight abuse of terminology, in our study of equivariant biharmonic maps we shall also consider the case that $f(r)$ is defined on a finite interval $[0,\,b]$, with $f(b)=0$, $f'(b)=-1$ and  $f^{(2k)}(b)=0$  {for} all  $k \geq 1$. In particular, we shall pay special attention to the case $f(r)= (1\slash~d)\, \sin (r \, d)$, where $d>0$ and $\, 0 \leq r \leq (\pi \slash d )\,$: in this case, our manifold is the Euclidean $m$-sphere $\s^m(d^2)$ of constant sectional curvature $d^2$.
\end{remark}

For future use, we also recall that the radial curvature $K(r)$ ($r>0$) of a model $M_f^m(o)$ is defined as the sectional curvature of any plane which contains $\partial \slash\, \partial  r$. The radial curvature is related to the function $f(r)$ by means of the following fundamental equation (the Jacobi equation, see \cite{GW}):
\begin{equation}\label{equazionecurvaturaradiale}
    f''(r)\,+\,K(r)\,f(r)\,=\,0 \,\, , \quad r>0 \,\, .
\end{equation}

Our reduction technique applies, in particular, to the study of \emph{rotationally symmetric} maps between models, i.e.:
\begin{eqnarray}\label{equivariantsymmetricmaps}
\nonumber
    \varphi_{\alpha}\,: \,\,M_f^m(o) &\to&
    \,\,M' _h\,^m (o') \\
    ( \theta, \, r) \, &\mapsto& \, (\theta, \, \alpha(r)) \,\, ,
\end{eqnarray}
where the function $\alpha(r)$ is smooth and positive on $(0,\,+\infty)$  and satisfies specific boundary conditions. For example, for $ \varphi_{\alpha}:\R^m\to\s^m$, we assume the following boundary condition for $\alpha$ which ensures the continuity of $\varphi_{\alpha}$ across the pole:
\begin{equation}\label{boundaryconditionforalfa}
                  \alpha(0)=0 \,\, .
\end{equation}

\begin{remark} More generally, \emph{equivariant} maps of the following type can be studied by similar methods:
\begin{eqnarray}\label{equivariantsymmetricmaps-non-usata}
\nonumber
    \varphi_{\alpha}\,: \,\,M_f^m(o) &\to&
    \,\,M' _h\,^n (o') \\
    ( \theta, \, r) \, &\mapsto& \, (\Psi_\lambda(\theta), \, \alpha(r)) \,\, ,
\end{eqnarray}
where $\Psi_\lambda(\theta)$ is a so-called {\it eigenmap} of eigenvalue $\lambda$ . That means that
$\Psi_\lambda:\, \s^{(m-1)}\to \s^{(n-1)}$ is a harmonic map with {\it constant} energy density equal to $(\lambda \slash 2)$. Important examples of eigenmaps are: the identity map of $\s^{(m-1)}$ ($\lambda=m-1$), the k-fold rotation $e^{i\theta} \rightsquigarrow e^{ik\theta}$ of $\s^1$ ($\lambda=k^2$); and, also, the Hopf fibrations $\s^3 \to \s^2$, $\s^7 \to \s^4$ and $\s^{15} \to \s^8$, with $\lambda$ equal to 8, 16 and 32 respectively. In the special case that $\Psi_\lambda$ is the identity map we recover the above case of rotationally symmetric maps.
\end{remark}

In this paper we shall focus on rotationally symmetric maps: in this case, the Lagrangian $L$ corresponding to the reduced energy \eqref{energiaridotta} is given (writing $r$ instead of $t$ for geometric convenience) by:
\begin{equation}\label{equivariant-energy}
   L(r, \alpha, \dot{\alpha})\,=\, \frac{1}{2}\,\, \left [\,\dot{\alpha}^2 + \, (m-1) \, \frac{h^2(\alpha)}{f^2(r)} \, \right ]\, V(r)\,\,,
\end{equation}
where the volume function $V(r)$ in this case is (up to an irrelevant multiplicative positive constant)
\begin{equation}\label{volume-per-mappe-modelli}
   V(r)= f^{m-1}\,(r)\,\,.
\end{equation}
\end{example}


\begin{remark}\label{remark-relazione-tension-volume} In the notation of \eqref{decrescenzarapidaenergia}, we observe that
\begin{equation}\label{significato-eq-eulero-lagrange-energia}
     \frac{d}{dh} \, \left [  E(\,\varphi_{(\alpha \, +\, h\, \beta) }\,)\, \right ] |_{h=0} \,\, =\,\,  \nabla_{\mathcal V} \, E(\varphi_{\alpha}) \,\, ,
\end{equation}
where ${\mathcal V}$ is the vector field along $\varphi_{\alpha}$ given by
\begin{equation}\label{definizione-di-v}
    {\mathcal V}= \beta \, \frac{\partial}{\partial \alpha}
\end{equation}
(here and below we identify horizontal vector fields with respect to $\pi_2$ with their projection on $\bar{N}=\R$). Now, comparing the result of the computations \eqref{calcolo1} with \eqref{decrescenzarapidaenergia}, we deduce that (again, up to an irrelevant multiplicative positive constant)
\begin{equation}\label{relazione-tra-tau-e-eq-eulero}
    \tau (\varphi_{\alpha})=- \frac{A_{\alpha}\,\,(r, \alpha, \dot{\alpha})\,}{V(r)} \,\,\, \frac{\partial}{\partial \, \alpha} \,\, ,
\end{equation}
where we have set:
\begin{equation}\label{definizione-di-A-alfa}
    A_{\alpha}\,(r, \alpha, \dot{\alpha})\,\,=\, \,\frac{\partial L}{\partial \alpha}- \frac{d}{dr} \,  \frac{\partial L}{\partial \dot{\alpha}}\,.
\end{equation}

\end{remark}

We are now in the right position to extend this setting to the framework of biharmonic maps. In the equivariant context, the bienergy functional \eqref{bienergia} takes the following form:
\begin{equation}\label{bienergiaridotta}
    E_2 (\varphi_\alpha)= \int_a^b \,\, L(t, \alpha, \dot{\alpha}, \ddot{\alpha}) \, dt \,\, ,
\end{equation}
where, as above, the function $\alpha :I \to J $ may have to satisfy suitable boundary conditions, $L(\cdot, \cdot, \cdot, \cdot)$ is a differentiable function depending on the geometry of the problem under consideration and $[a,b]$ is an arbitrary interval of $I$. Now, according to Lemma~\ref{adaptationofXin}, the condition of biharmonicity for an equivariant map $\varphi_{\alpha}$ with basic bitension field is equivalent to $\alpha $ being a critical point of the reduced bienergy functional \eqref{bienergiaridotta}. Next, following \cite{MR}, we state the most useful result in this context:

\begin{lemma}\label{reductiontheorem}
A differentiable function $\alpha : I \to J $ is a critical point of the reduced bienergy functional \eqref{bienergiaridotta} if and only if it is a solution of the following differential equation:
\begin{equation}\label{eqeulero2}
     \frac{\partial L}{\partial \alpha} - \frac{d}{dt} \,  \frac{\partial L}{\partial \dot{\alpha}}  + \frac{d\,^2}{dt^2} \, \frac{\partial L}{\partial \ddot{\alpha}}   \, = \, 0 \,\, .
\end{equation}
\end{lemma}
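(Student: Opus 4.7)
The plan is to imitate the derivation of \eqref{eqeulero1} carried out in \eqref{calcolo1}, extending it to handle the dependence of $L$ on $\ddot{\alpha}$. As there, criticality of $\alpha$ means the vanishing, for every $[a,b]\subset I$ and every test function $\beta$ compactly supported in $(a,b)$, of
\begin{equation*}
\frac{d}{dh}\Bigl[E_2\bigl(\varphi_{\alpha+h\beta}\bigr)\Bigr]\Big|_{h=0}
\,=\,\frac{d}{dh}\left(\int_a^b L(t,\alpha+h\beta,\dot{\alpha}+h\dot{\beta},\ddot{\alpha}+h\ddot{\beta})\,dt\right)\Big|_{h=0}.
\end{equation*}
Differentiating under the integral sign and applying the chain rule, this directional derivative becomes
\begin{equation*}
\int_a^b\left(\frac{\partial L}{\partial \alpha}\,\beta\,+\,\frac{\partial L}{\partial \dot{\alpha}}\,\dot{\beta}\,+\,\frac{\partial L}{\partial \ddot{\alpha}}\,\ddot{\beta}\right) dt.
\end{equation*}

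Next, I would integrate by parts once in the $\dot{\beta}$ term and twice in the $\ddot{\beta}$ term. Since $\beta$ has compact support in $(a,b)$, both $\beta$ and $\dot{\beta}$ vanish at $t=a$ and $t=b$, so all boundary contributions are zero; explicitly,
\begin{equation*}
\int_a^b \frac{\partial L}{\partial \dot{\alpha}}\,\dot{\beta}\,dt \,=\, -\int_a^b \frac{d}{dt}\frac{\partial L}{\partial \dot{\alpha}}\,\beta\,dt,
\end{equation*}
and, integrating by parts twice,
\begin{equation*}
\int_a^b \frac{\partial L}{\partial \ddot{\alpha}}\,\ddot{\beta}\,dt \,=\, -\int_a^b \frac{d}{dt}\frac{\partial L}{\partial \ddot{\alpha}}\,\dot{\beta}\,dt \,=\, \int_a^b \frac{d^{\,2}}{dt^{\,2}}\frac{\partial L}{\partial \ddot{\alpha}}\,\beta\,dt.
\end{equation*}
Collecting terms, the directional derivative takes the form
\begin{equation*}
\int_a^b \left[\left(\frac{\partial L}{\partial \alpha} - \frac{d}{dt}\frac{\partial L}{\partial \dot{\alpha}} + \frac{d^{\,2}}{dt^{\,2}}\frac{\partial L}{\partial \ddot{\alpha}}\right)\beta\right] dt.
\end{equation*}

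Finally, I would invoke the fundamental lemma of the calculus of variations: since the factor multiplying $\beta$ is continuous and $\beta$ ranges over all smooth functions compactly supported in $(a,b)$ for every subinterval $[a,b]\subset I$, the integral vanishes identically if and only if
\begin{equation*}
\frac{\partial L}{\partial \alpha} - \frac{d}{dt}\frac{\partial L}{\partial \dot{\alpha}} + \frac{d^{\,2}}{dt^{\,2}}\frac{\partial L}{\partial \ddot{\alpha}} \,=\, 0
\end{equation*}
on $I$, which is exactly \eqref{eqeulero2}.

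The argument is essentially a routine variational calculation, and no step looks genuinely hard. The only point requiring mild care is the choice of test functions: one needs $\beta$ with compact support in $(a,b)$ (rather than merely $\beta(a)=\beta(b)=0$ as in the first-order case \eqref{supportocompatto}), so that the second integration by parts also produces no boundary terms. With this choice, both applications of integration by parts are free of boundary contributions and the conclusion follows at once from the fundamental lemma.
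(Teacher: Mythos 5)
Your proposal follows exactly the route of the paper's (outlined) proof: differentiate under the integral sign, integrate by parts once in the $\dot{\beta}$ term and twice in the $\ddot{\beta}$ term, and conclude via the fundamental lemma of the calculus of variations; your extra remark that $\beta$ must have compact support so that $\dot{\beta}$ also vanishes at the endpoints correctly fills in the one detail the paper leaves implicit. The argument is correct and essentially identical to the paper's.
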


\begin{proof} Here the proof will only be outlined. Essentially, it amounts to compute the following directional derivative:

\begin{eqnarray} \label{calcolo2}
\nonumber  \frac{d}{dh} \, \left [  E_2\,\left(\varphi_{\alpha \, +\, h\, \beta}\,\right) \right ] |_{h=0} &=& \frac{d}{dh}\left ( \int_a^b \,\, L(t, \alpha+h\beta, \dot{\alpha}+h\dot{\beta}, \ddot{\alpha}+h\ddot{\beta})\, dt  \right )\Big {|}_{h=0} \\ \nonumber
   &=& \int_a^b \,\left (\frac{d}{dh}\, L(t, \alpha+h\beta, \dot{\alpha}+h\dot{\beta},\ddot{\alpha}+h\ddot{\beta})|_{h=0} \right )\, dt  \\ \nonumber
   &=& \int_a^b \,\left (  \frac{\partial L}{\partial \alpha} \, \beta +  \frac{\partial L}{\partial \dot{ \alpha}} \, \dot{\beta} + \frac{\partial L}{\partial \ddot{ \alpha}} \, \ddot{\beta}\right ) \, dt \\
   &=& \int_a^b \left [ \left ( \frac{\partial L}{\partial \alpha} - \frac{d}{dt} \,  \frac{\partial L}{\partial \dot{\alpha}} + \frac{d\,^2}{dt^2} \,  \frac{\partial L}{\partial \ddot{\alpha}}\right ) \, \beta \right ] \, dt \,\, .
\end{eqnarray}
Now, the vanishing of this directional derivative for all $[a,b]$ and all $\beta$ supported in $[a,b]$ is equivalent to the fact $\alpha$ is a solution of \eqref{eqeulero2}, as required.
\end{proof}

By way of summary, from Lemmata \eqref{adaptationofXin} and \eqref{reductiontheorem} we deduce (see \cite{MR}):

\begin{theorem}\label{bi-reduction-theorem} An \emph{equivariant} map $\varphi_{\alpha}$ with basic bitension field is \emph{biharmonic} if and only if $\alpha$ is a solution of \eqref{eqeulero2}.
\end{theorem}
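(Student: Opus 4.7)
The proof strategy is simply to chain together the two preceding lemmas. By hypothesis the bitension field $\tau_2(\varphi_\alpha)$ is basic, so Lemma \ref{adaptationofXin} applies and says that $\varphi_\alpha$ is biharmonic if and only if it is a critical point of $E_2$ with respect to all equivariant variations. The task therefore reduces to translating ``critical with respect to equivariant variations'' into the Euler--Lagrange equation \eqref{eqeulero2}.

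The key linking step is the observation that, because the base manifolds in the diagram \eqref{diagrammacommutativo} are one-dimensional, an equivariant map is completely encoded by its associated function $\alpha : I \to J$. Consequently the equivariant variations of $\varphi_\alpha$ are in one-to-one correspondence with one-parameter families of the form $\alpha + h\beta$, where $\beta : I \to \R$ is smooth and supported in a compact interval $[a,b] \subset I$, and under this correspondence the bienergy along the variation coincides with the reduced functional
\begin{equation*}
E_2\bigl(\varphi_{\alpha + h\beta}\bigr) \;=\; \int_a^b L\bigl(t,\, \alpha+h\beta,\, \dot{\alpha}+h\dot{\beta},\, \ddot{\alpha}+h\ddot{\beta}\bigr)\, dt,
\end{equation*}
exactly as in \eqref{bienergiaridotta}. (This is the point at which the fiber volumes of $\pi_1$ and $\pi_2$ are absorbed into the volume factor built into $L$, up to an irrelevant positive constant.)

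Once this identification is in place, Lemma \ref{reductiontheorem} finishes the job: the vanishing of $\frac{d}{dh}E_2(\varphi_{\alpha+h\beta})\big|_{h=0}$ for every admissible $\beta$ and every $[a,b]$ is equivalent to $\alpha$ solving \eqref{eqeulero2}. Composing the two equivalences gives the theorem.

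The only genuine subtlety I expect is the correspondence between equivariant variations of $\varphi_\alpha$ and perturbations $\alpha + h\beta$ of the defining function: one must check that every equivariant variation is realised this way and that, conversely, every such $\beta$ produces a bona fide equivariant variation with basic variation field. Both follow from the description of equivariant maps as being determined by a map between the (1-dimensional) base quotients, so the argument is routine, but it is the only place where the hypothesis ``equivariant with respect to Riemannian submersions'' is genuinely used beyond invoking Lemma \ref{adaptationofXin}.
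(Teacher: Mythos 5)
Your proposal is correct and follows essentially the same route as the paper, which deduces the theorem precisely by combining Lemma \ref{adaptationofXin} (biharmonicity of a map with basic bitension field is equivalent to criticality of the reduced bienergy with respect to equivariant variations) with Lemma \ref{reductiontheorem} (criticality of the reduced functional is equivalent to the Euler--Lagrange equation \eqref{eqeulero2}). Your closing remark on the correspondence between equivariant variations and perturbations $\alpha + h\beta$ makes explicit a point the paper leaves implicit, but it does not change the argument.
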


\begin{remark}\label{differentiability-across-pole}
If one is able to produce a rotationally symmetric map $\varphi_{\alpha}:\R^m\to\s^m$ such that $\alpha(r)$ is a solution of \eqref{eqeulero2} which satisfies \eqref{boundaryconditionforalfa}, then one has to check the differentiability of $\varphi_{\alpha}$ across the pole. To this purpose, it is enough to verify that the following requirements are satisfied:
\begin{equation}\label{boundaryconditionforalfa-smoothness}
   \left \{
              \begin{array}{l}
                \alpha^{(2k)}(0)=0 \,\,\,\, \forall \, k \geq 1; \\
                \,\\
                \alpha^{(2k+1)}(0)<\infty \,\,\,\, \forall \, k \geq 0.\\
              \end{array}
            \right .
\end{equation}
In particular, it follows that $\alpha:[0,\infty)\to[0,\pi)$ is smooth.
\end{remark}

\begin{remark}\label{remark-equivariance-per-immersions}
In some important geometric applications, such as the study of certain $G-$invariant immersions (see \secref{G-invariant-immersions} below), the base manifold $\bar{N}$ in \eqref{diagrammacommutativo} is an $r$-dimensional orbit space, so that the base map $\overline{\varphi}$ is actually a curve and it is necessary to replace \eqref{bienergiaridotta} by:
\begin{equation}\label{bienergiaridottavettoriale}
    E_2(\varphi_{\alpha_j})= \int_a^b \,\, L(t, \alpha_j, \dot{\alpha_j}, \ddot{\alpha_j}) \, dt \,\, , \,\quad j= 1,\ldots,\,r \,\, .
\end{equation}
In this context, an argument similar to Lemma~\ref{reductiontheorem} leads us to the conclusion that the critical points of the functional \eqref{bienergiaridottavettoriale} are precisely the solutions of the following \emph{system} of ordinary differential equations:
\begin{equation}\label{eqeulero2vettoriale}
     \frac{\partial L}{\partial \alpha_j} - \frac{d}{dt} \,  \frac{\partial L}{\partial \dot{\alpha_j}}  + \frac{d\,^2}{dt^2} \, \frac{\partial L}{\partial \ddot{\alpha_j}}   \, = \, 0 \,\, , \,\quad j= 1,\ldots,\,r  \,\, .
\end{equation}
We shall clarify this case in \secref{G-invariant-immersions} below.
\end{remark}

\begin{remark} Our approach require explicitly that the equivariant map $\varphi_{\alpha}$ have basic bitension field. Therefore, it would be very interesting to obtain a complete geometric characterization of the situations in which this property is satisfied. In general, this appears to be a rather technical and difficult problem. However, here we just point out that there are important, large families of equivariant maps for which it is immediate to conclude that both the tension and the bitension fields are basic. In particular, that occurs when the Riemannian submersions $\pi_i \,, \,\, i=1,2$ in \eqref{diagrammacommutativo} are determined by isometric actions of Lie groups, and the equivariant maps $\varphi_{\alpha}$, when restricted to the fibres endowed with the induced metric, are harmonic maps with constant energy density. All the examples in \secref{mappe-tra-modelli} and \secref{G-invariant-immersions} below are of this type.
\end{remark}

\section{Rotationally symmetric biharmonic maps between models}\label{mappe-tra-modelli}

In this section we study rotationally symmetric maps between models, as defined in \eqref{equivariantsymmetricmaps}. Further details on this topic and related examples can be found in \cite{Montaldobis}, \cite{HM}, \cite{MOR-JJAA}, \cite{MR} and\cite{WangOuYang}.
First, by using \eqref{eqeulero1}, \eqref{equivariant-energy} and \eqref{relazione-tra-tau-e-eq-eulero}, we easily compute the tension field of a rotationally symmetric map as in \eqref{equivariantsymmetricmaps}. We obtain
\begin{equation}\label{tensionfieldtrawarped}
    \tau (\varphi_{\alpha})= \left [\ddot{\alpha}(r) + (m-1)\, \frac{f'(r)}{f(r)}\, \dot{\alpha}(r)- (m-1)\, \, \frac{h(\alpha)\,h'(\alpha)}{f^2(r)} \right ] \,\, \frac{\partial}{\partial \alpha} \,\,,\quad\text{on}\;\; (0,\infty).
\end{equation}
Therefore, in this case the reduced bienergy is given, up to an irrelevant multiplicative  constant factor, by the following expression:
\begin{equation}\label{reducedbienergiatrawarped}
    E_2(\varphi_\alpha) =  \frac{1}{2}\, \int_0^{+\infty } \left [\ddot{\alpha}(r) + (m-1)\frac{f'(r)}{f(r)}\, \dot{\alpha}(r)- (m-1)\frac{h(\alpha)\,h'(\alpha)}{f^2(r)} \right ]^2 f^{m-1}(r)\, dr\,\,.
\end{equation}
Next, we apply \eqref{eqeulero2} to \eqref{reducedbienergiatrawarped} and, after a long but straightforward computation we find that the condition of biharmonicity for rotationally symmetric maps of the type \eqref{equivariantsymmetricmaps} is the following (to simplify notation, we write $f$ and $h(\alpha)$ instead of $f(r)$ and $h(\alpha(r))$ respectively):
\begin{equation}\label{eqeulero2-esplicita}
\begin{aligned}
   & f^{m-5} \Big ((m-1) h(\alpha ) (2 f f''
   h'(\alpha )-2 (m-3) f f' \dot{\alpha} h''(\alpha
   )+2 (m-4) f'^2 h'(\alpha )\\
   &-f^2
   (h^{(3)}(\alpha ) \dot{\alpha}^2+2\ddot{\alpha}
   h''(\alpha ))+(m-1) h'(\alpha )^3)+f
   ((m-3) (m-1) f f'^2 \ddot{\alpha}\\
   &-(m-3) (m-1)
   f'^3 \dot{\alpha}+(m-1) f (f (f^{(3)}
   \dot{\alpha}+2 f'' \ddot{\alpha})-2 \ddot{\alpha}
   h'(\alpha )^2\\
   &-3 \dot{\alpha}^2 h'(\alpha ) h''(\alpha
   ))+(m-1) f' (\dot{\alpha} ((m-4) f
   f''-2 (m-3) h'(\alpha )^2)+2 f^2 \alpha
   ^{(3)})\\
   &+f^3 \alpha ^{(4)})+(m-1)^2
   h(\alpha )^2 h'(\alpha ) h''(\alpha )\Big )=0\,\, .\\
\end{aligned}
\end{equation}

\begin{remark}
We point out that, to  the purpose of comparison  with the equation
given for $m=2$ in \cite[Corollary~2.3]{WangOuYang}, equation \eqref{eqeulero2} can be rewritten as follows:
$$
\begin{cases}
F''+(m-1) \dfrac{f f' F'- h'(\alpha)^2 F}{f^2}-(m-1) \dfrac{h(\alpha) h''(\alpha)F}{f^2}=0\\
F=\ddot{\alpha}+(m-1) \dfrac{f'}{f} \dot{\alpha} -(m-1) \dfrac{h(\alpha) h'(\alpha)}{f^2}\,\,.
\end{cases}
$$
\end{remark}
In summary, a rotationally symmetric map $\varphi_{\alpha}$ as in \eqref{equivariantsymmetricmaps} is \emph{biharmonic} if and only if $\alpha$ is a solution of \eqref{eqeulero2-esplicita}.

At this stage, it is worth to point out two facts. Our computation of the condition of biharmonicity has avoided the explicit use (see equations \eqref{secondaformalocale} and \eqref{bitensionfield}) of the Christoffel symbols and their related riemannian geometric machinery. On the other hand, just a short look at \eqref{eqeulero2-esplicita} reveals the complexity of the analytical task required to deal with a fourth order differential equation. \\


The difficulty of the general problem suggests to restrict investigation to speficic, geometrically significant, families of maps. In particular, we now look for  rotationally symmetric, proper biharmonic \emph{conformal} diffeomorphisms between $m$-dimensional models of constant sectional curvature. We shall be able to obtain a complete description of such maps.
First, let us point out when a rotationally symmetric map as in \eqref{equivariantsymmetricmaps} is conformal. Comparing dilations of vectors which are
respectively orthogonal and tangent to the radial direction, we easily find that a map of the type \eqref{equivariantsymmetricmaps} is conformal iff
$$
    \dot{\alpha}^2=\frac{h^2(\alpha)}{f^2(r)} \,\,
    $$
or, taking into account the boundary condition \eqref{boundaryconditionforalfa},
\begin{equation}\label{equazionediconformalita}
    \dot{\alpha}=\frac{h(\alpha)}{f(r)} \,\, .
\end{equation}
Using \eqref{equazionediconformalita} into the biharmonicity equation \eqref{eqeulero2-esplicita}, we find that the condition of biharmonicity for a conformal map of the type \eqref{equivariantsymmetricmaps} is the following:

\begin{equation}\label{equazionebiarmonicaeconformeperognim}
   \begin{aligned}
& (m-2) f^{m-5} h(\alpha )\Big(f^2 f^{(3)}+h'(\alpha
   ) \big(4 f f''+(m-5) h(\alpha ) h''(\alpha
   )\big)\\
   &+(3 m-14) f'^2 h'(\alpha )-2 (m-4) f'^3\\
   &+f'
   \big((m-7) f f''-2 (m-4) h(\alpha ) h''(\alpha )-2
   (m-4) h'(\alpha )^2\big)\\
   &- h(\alpha )^2 h^{(3)}(\alpha
   )+(m-2) h'(\alpha )^3\Big) = 0\,\, .
\end{aligned}
\end{equation}
In particular, if $m=4$, equation \eqref{equazionebiarmonicaeconformeperognim} becomes:
\begin{equation}\label{equazionebiarmonicaeconformem=4}
\begin{aligned}
&\frac{2 h(\alpha)}{f} \,\,  \Big(f^2 f^{(3)}+h'(\alpha)
   \big(4 f f''-h(\alpha) h''(\alpha)\big) \\
   & -2
   f'^2 h'(\alpha )-3 f f' f''-h(\alpha )^2
   h^{(3)}(\alpha)+2 h'(\alpha)^3\Big) =0 \,\, .
   \end{aligned}
\end{equation}

Now, we are in the position to provide a complete description of solutions in the case of maps between 4-dimensional models of constant sectional curvature. More precisely, we have the following result of \cite{MOR-JJAA}:

\begin{theorem}\label{proposizionediclassificazione} Let us consider rotationally symmetric maps as in \eqref{equivariantsymmetricmaps}-\eqref{boundaryconditionforalfa}. Let us assume that $m=4$ and that both models have constant sectional curvature $0,1$ or $-1$ (see Remarks~\ref{remarksuR^meH^m}). Then the  biharmonic conformal diffeomorphisms of type  \eqref{equivariantsymmetricmaps}-\eqref{boundaryconditionforalfa} can be enumerated as follows ($c$ denotes a real positive constant):
\begin{enumerate}
\item[\textbf{Case 1}] - $f(r)=r\,.$
\begin{enumerate}
\item[A] - $h(\alpha)=\alpha$, $\alpha(r)= c\,r$ (harmonic diffeomorphisms from $\R^4$ to itself);
\item[B]- $h(\alpha)= \sin(\alpha)$ and
\begin{equation}\label{definizionesoluzionedaeuclideoasfera}
    \alpha(r)= \,\,2\, \arctan (c^2\,r)\,\,\,\, (r\geq 0)
\end{equation}
(proper biharmonic diffeomorphisms from $\R^4$ to $\s^4\smallsetminus\{{\rm south \,\, pole}\}$);
\item[C]- $h(\alpha)=\sinh( \alpha)$ and
\begin{equation}\label{definizionesoluzionedaeuclideoaiperbolico}
    \alpha(r)= \,\, \,\,2 \, \tanh ^{-1} (c^2\,r)\,\,\,\, (0 \leq r < \frac{1}{c^2})
\end{equation}
(proper biharmonic diffeomorphisms from $B^4(1 \slash c^2)$ (i.e., the open ball of radius $(1 \slash c^2)$ in $\R^4$) to $\H^4$).
\end{enumerate}
\item[\textbf{Case 2}] -  $f(r)=\sin(r).$
\begin{enumerate}
\item[A] - if $h(\alpha)=\alpha$, then there is no solution;
\item[B]-  if $h(\alpha)=\sin(\alpha)$, then
$$
\alpha (r)= r\,\,(0 \leq r < \pi)
$$
gives rise to a harmonic conformal diffeomorphism, but in this case we do not have proper biharmonic examples;
\item[C]-  if $h(\alpha)= \sinh(\alpha)$, then there is no solution.
\end{enumerate}
\item[\textbf{Case 3}] -  $f(r)=\sinh(r).$
\begin{enumerate}
\item[A] - if $h(\alpha)=\alpha$, then there is no solution;
\item[B]-  if $h(\alpha)=\sin(\alpha)$, then there is no solution;
\item[C]-  if $h(\alpha)=\sinh(\alpha)$, then
$$
\alpha (r)= \,r \,\, (r \geq 0),
$$
 produces a harmonic conformal diffeomorphism, while there is no proper biharmonic example.
 \end{enumerate}
 \end{enumerate}
\end{theorem}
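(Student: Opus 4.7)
My plan is to exploit the first-order reduction provided by the conformality equation \eqref{equazionediconformalita} to turn the fourth-order biharmonicity condition \eqref{equazionebiarmonicaeconformem=4} into an explicit algebraic identity that can be analyzed case by case. For each of the nine pairs $(f,h)$ with $f\in\{r,\sin r,\sinh r\}$ and $h\in\{\alpha,\sin\alpha,\sinh\alpha\}$, the ODE \eqref{equazionediconformalita} is separable and integrates in closed form: e.g.\ $\int d\alpha/h(\alpha)=\int dr/f(r)$ gives $\alpha=cr$, $\tan(\alpha/2)=c\,r$ (up to reparametrization $c\mapsto c^2$), $\tanh(\alpha/2)=c\,r$, and so on, with the boundary condition \eqref{boundaryconditionforalfa} fixing the constant of integration. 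This yields, in each of the nine cases, a one-parameter family $\alpha_c(r)$ of conformal diffeomorphisms. Substituting $\alpha_c$ into \eqref{equazionebiarmonicaeconformem=4} produces an expression in $r$ (depending parametrically on $c$) whose identical vanishing is exactly the biharmonicity requirement.

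For the three sub-cases of Case~1 (Euclidean source, $f(r)=r$), the terms containing $f''$ and $f^{(3)}$ drop out of \eqref{equazionebiarmonicaeconformem=4}; inserting the explicit $\alpha_c(r)$ and simplifying by means of $\sin^2+\cos^2=1$ or $\cosh^2-\sinh^2=1$ collapses the remaining terms identically to $0$, so every such $\varphi_{\alpha_c}$ is biharmonic, and it is proper unless $h(\alpha)=\alpha$. For Cases~2 and 3, after the analogous substitution the equation factors as a constant multiple of $\sin r\,(\cos r-\cos\alpha)$ (respectively, $\sinh r\,(\cosh\alpha-\cosh r)$) when the target has matching curvature type, so its vanishing on a non-trivial interval forces $\alpha(r)\equiv r$; this corresponds to the identity (conformal and harmonic, obtained for a unique value of the parameter $c$) and no proper biharmonic map. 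When instead the target is of different type, the corresponding factor takes the form $\cosh\alpha-\cos r$, $\cos\alpha-\cosh r$, $1-\cosh r$, or $\cos r-1$, which, by the elementary bounds $\cosh\geq 1\geq\cos$, can vanish only at isolated points, ruling out any biharmonic solution and settling Cases~2A, 2C, 3A, 3B.

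The principal difficulty is the algebraic bookkeeping involved in substituting the closed-form $\alpha_c(r)$ into \eqref{equazionebiarmonicaeconformem=4} and recognizing the factorizations above: the intermediate expressions are lengthy and contain mixed trigonometric/hyperbolic terms in $\alpha$ and $r$, so one must group them carefully in order to use the Pythagorean identities at the right step. A secondary technical point is to verify, for the proper biharmonic examples produced in Case~1, the smoothness requirements \eqref{boundaryconditionforalfa-smoothness} across the pole; this is immediate for $\alpha=cr$, and for $\alpha=2\arctan(c^2 r)$ and $\alpha=2\tanh^{-1}(c^2 r)$ it follows from the odd Taylor expansion at $r=0$, which shows that all even-order derivatives of $\alpha$ vanish at the origin so that $\varphi_{\alpha}$ extends smoothly across the pole.
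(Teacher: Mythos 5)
Your proposal is correct and follows essentially the same route as the paper, which states that the proof ``amounts to a case by case analysis of \eqref{equazionebiarmonicaeconformem=4}'' and omits the details: you integrate the separable conformality equation \eqref{equazionediconformalita} in each of the nine cases and test the resulting $\alpha_c$ against \eqref{equazionebiarmonicaeconformem=4}, and your claimed factorizations (identical vanishing when $f(r)=r$, and factors of the form $\sin^2 r\,(\cos r-\cos\alpha)$, $\sinh^2 r\,(\cosh\alpha-\cosh r)$, $\cos r-\cosh\alpha$, etc.\ otherwise) check out. The only cosmetic remark is that in Case~1 the bracket in \eqref{equazionebiarmonicaeconformem=4} vanishes identically in $\alpha$ once $f(r)=r$ and $h\in\{\alpha,\sin\alpha,\sinh\alpha\}$, so substituting the explicit $\alpha_c$ is not even needed there.
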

\begin{proof}\label{proofofproposizionediclassificazione} The proof amounts to a case by case analysis of \eqref{equazionebiarmonicaeconformem=4} and we omit it.
\end{proof}

\begin{remark} Direct inspection shows that all the solutions given in \eqref{definizionesoluzionedaeuclideoasfera} and \eqref{definizionesoluzionedaeuclideoaiperbolico} satisfy \eqref{boundaryconditionforalfa-smoothness} and so they extend smoothly across the pole.
\end{remark}

\begin{remark}
We point out that \eqref{definizionesoluzionedaeuclideoasfera} and \eqref{definizionesoluzionedaeuclideoaiperbolico} correspond to a dilation of $\R^4$ composed with the inverse of the stereographic projection of $\s^4$ and $\H^4$ respectively. The map $\varphi_{\alpha}:\R^4\to\s^4$ associated to the solution \eqref{definizionesoluzionedaeuclideoasfera} also provides an example of {\it extrinsic} biharmonic map (see \cite{Cooper}). 
\end{remark}

\begin{remark} A direct, case by case inspection shows that, if $m>2$ and $m\neq4$, then there exists no rotationally symmetric, conformal, proper biharmonic map between $m$-dimensional models of constant sectional curvature. By way of example, if $f(r)=r$ and $h(\alpha)= \sin \alpha$, then condition \eqref{equazionebiarmonicaeconformeperognim} becomes:
\begin{equation}\label{casogeneraleconformebiharmonicesempio}
    4\,(m-2)\,(m-4)\,r^{m-5}\,\sin (2\,\alpha) \,\sin^4 (\alpha\slash2) \,=\,0 \, \, ,
\end{equation}
which is not possible when $m \neq 2,\,4$. The other cases are similar, so we omit further details.
\end{remark}

\begin{remark}\label{Jager-Kaul} It is interesting to compare the proper biharmonic diffeomorphisms from $\R^4$ to $\s^4 \smallsetminus \{{\rm south \,\, pole} \}$ with the qualitative behaviour of rotationally symmetric harmonic maps (see \cite{JK}). In particular, J\"{a}ger and Kaul proved that the image of the functions $\alpha(r)$ associated to these harmonic maps cover a range $[0,\,R_4]$, with $(\pi \slash 2) < R_4 < \pi$, and $\alpha(r)$ oscillates around $(\pi \slash 2)$ as $r \rightarrow +\infty$ (further details concerning the numerical value of $R_4$ can be found in \cite{JK}).
J\"{a}ger and Kaul applied their results to draw some interesting conclusions concerning the existence of rotationally symmetric solutions to the Dirichlet problem for maps from the Euclidean unit $m$-ball $B^m$ to $S^m$. In particular, if $m=4$, they proved that the Dirichlet problem with boundary data
\begin{equation}\label{dirichletboundarydata}
    (\theta,\,1) \rightarrow (\theta,\,\alpha(1)=R^*)
\end{equation}
admits a rotationally symmetric solution if and only if:
\begin{equation}\label{soluzionijagerkaul}
    0 \leq R^* \leq R_4 \,\,.
\end{equation}
By contrast, the existence of the  proper biharmonic conformal diffeomorphisms of Theorem~\ref{proposizionediclassificazione} implies that the boundary value problem \eqref{dirichletboundarydata} admits proper biharmonic solutions for all
$$
0 \leq R^* < \pi \,\,.
$$
\end{remark}

\begin{remark}\label{lavoroLoubeaueBaird} The explicit solutions of Cases 1B and 1C above were also studied in \cite{Baird-et-al} and \cite{Loubeau-et-al}. In particular, it was shown in \cite{Loubeau-et-al} that they do not provide examples of biharmonic morphisms.
\end{remark}

A natural, general development of Theorem~\ref{proposizionediclassificazione} is to study when a \emph{conformal}, rotationally symmetric map as in \eqref{equivariantsymmetricmaps} is proper biharmonic.

Since, once again, the general case appears to be difficult, we analyze the case that the domain model is the Euclidean space, obtaining a complete answer when the domain model is $\R^4$.
Our first result is the following proposition.

\begin{proposition}
Let $M' _h\,^4 (o')$ be a $4-$dimensional model and assume that the function $h$ satisfies the additional condition
\begin{equation}\label{biharmonic+conformal}
    \, h^2\, h''' \,+h'\,(2+h\,h'')\,-2\,h'^3=\,0 \,\, .
\end{equation}
Then any rotationally symmetric conformal diffeomorphism $\varphi_{\alpha}:  \R^4 \to
   {M' _h}^4 (o')$ is biharmonic.
\end{proposition}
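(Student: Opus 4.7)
The plan is to specialize the already-derived biharmonicity equation \eqref{equazionebiarmonicaeconformem=4} for rotationally symmetric \emph{conformal} maps of the form \eqref{equivariantsymmetricmaps} in dimension $m=4$ to the particular case $f(r)=r$, which corresponds to the domain model $\R^4$. Because conformality of $\varphi_\alpha$ has already been built into \eqref{equazionebiarmonicaeconformem=4} via the substitution of the first-order relation \eqref{equazionediconformalita}, nothing further about $\alpha$ needs to be assumed beyond its existence as a conformal diffeomorphism.

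The first step is to note that for $f(r)=r$ one has $f'\equiv 1$ and $f''\equiv f'''\equiv 0$. Inserting these values into the bracketed factor of \eqref{equazionebiarmonicaeconformem=4} annihilates the four terms $f^2 f'''$, $4ff''\,h'(\alpha)$, $-3ff'f''$, and reduces $-2f'^2\,h'(\alpha)$ to $-2h'(\alpha)$. The surviving expression inside the brackets is
$$-h(\alpha)\,h'(\alpha)\,h''(\alpha)\;-\;2h'(\alpha)\;-\;h(\alpha)^2\,h'''(\alpha)\;+\;2h'(\alpha)^3.$$
A direct rearrangement shows that this is exactly $-\bigl[h(\alpha)^2 h'''(\alpha)+h'(\alpha)\bigl(2+h(\alpha)h''(\alpha)\bigr)-2h'(\alpha)^3\bigr]$, i.e.\ minus the left-hand side of the hypothesis \eqref{biharmonic+conformal} evaluated at $\alpha$.

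The second step, and the conclusion, is then immediate: if $h$ satisfies \eqref{biharmonic+conformal} identically in its variable, the bracketed factor of \eqref{equazionebiarmonicaeconformem=4} vanishes for every value of $\alpha(r)$ along the map, so the biharmonicity equation is verified regardless of the specific conformal $\alpha$. The overall prefactor $2h(\alpha)/r$ plays no role away from the pole, and no additional constraint on $\alpha$ arises.

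Structurally, the striking feature exploited by the proof is that specializing to $f=r$ causes the coupled ODE \eqref{equazionebiarmonicaeconformem=4} in $(f,h,\alpha)$ to decouple: the entire biharmonicity condition collapses to an autonomous ODE in the codomain's warping function $h$ alone, independent of $r$ and of the particular conformal solution $\alpha$. Because of this decoupling the argument is essentially a one-line substitution followed by an algebraic identification, and the only care needed is the harmless non-vanishing of the prefactor $2h(\alpha)/r$ away from the pole, which follows from $\varphi_\alpha$ being a diffeomorphism. There is no substantive analytic obstacle.
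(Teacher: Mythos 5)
Your proof is correct and follows exactly the route the paper itself indicates: substituting $f(r)=r$ (hence $f'\equiv 1$, $f''\equiv f'''\equiv 0$) into \eqref{equazionebiarmonicaeconformem=4} reduces the bracketed factor to minus the left-hand side of \eqref{biharmonic+conformal}, so the hypothesis on $h$ makes the biharmonicity condition hold identically, independently of the particular conformal $\alpha$. The only cosmetic remark is that the non-vanishing of the prefactor $2h(\alpha)/r$ is irrelevant for this direction of the implication (it matters only for the converse stated just after the proposition); otherwise your argument matches the paper's.
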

Condition \eqref{biharmonic+conformal} is obtained  by \eqref{equazionebiarmonicaeconformem=4} using $f(r)=r$. In particular, it must be verified whenever 
$\varphi_{\alpha}:  \R^4 \to {M' _h}^4 (o')$ is a rotationally symmetric biharmonic conformal diffeomorphism. Of course, $h(\alpha)=\alpha$, $h(\alpha)=\sin(\alpha)$ and $h(\alpha)=\sinh(\alpha)$ are solutions of \eqref{biharmonic+conformal}. The next result basically states that the examples provided in Theorem~\ref{proposizionediclassificazione} are the only rotationally symmetric proper biharmonic conformal diffeomorphisms satisfying the boundary condition \eqref{boundaryconditionforalfa}.

\begin{theorem}\label{maintheorem} 
Let $M' _h\,^4 (o')$ be a $4-$dimensional model and assume that the function $h$ satisfies
\eqref{biharmonic+conformal}. Then $M' _h\,^4 (o')$ has constant sectional curvature.
\end{theorem}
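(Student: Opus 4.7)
\medskip

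\noindent\textbf{Proof strategy.} A $4$-dimensional model has constant sectional curvature $K_0$ if and only if the Jacobi equation $h''+K_0h=0$ holds: under the initial conditions $h(0)=0$, $h'(0)=1$ this forces also the first-integral identity $(h')^2+K_0h^2=1$, which is precisely the statement that the tangential sectional curvatures match the radial one $-h''/h$. The plan is therefore to show that the biharmonic-conformal ODE \eqref{biharmonic+conformal} forces
$$
K(\alpha)\,:=\,-\frac{h''(\alpha)}{h(\alpha)}\qquad\text{to be constant,}\qquad\text{and}\qquad\Phi(\alpha)\,:=\,1-(h'(\alpha))^2-K(\alpha)\,h^2(\alpha)\,\equiv\,0
$$
on $(0,+\infty)$. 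Both $K\,h^2=-h''h$ and $\Phi=1-(h')^2+h''h$ are smooth on $[0,+\infty)$ by virtue of the model conditions \eqref{condizioni-su-f}.

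\medskip

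First I would rewrite \eqref{biharmonic+conformal} in terms of $K$ and $\Phi$. Plugging $h''=-Kh$ and $h'''=-K'h-Kh'$ into $h^2h'''+h'(2+hh'')-2(h')^3=0$ and regrouping, the combination $1-(h')^2-Kh^2=\Phi$ emerges spontaneously and the third-order ODE condenses to
$$
-K'(\alpha)\,h^3(\alpha)\,+\,2h'(\alpha)\,\Phi(\alpha)\,=\,0.
$$
Differentiating $\Phi$ and again using $h''=-Kh$, the two $Khh'$ cross-terms cancel, producing the parallel identity $\Phi'=-K'h^2$.

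\medskip

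Second I would combine the two: since $(h^2\Phi)'=2hh'\Phi+h^2\Phi'=2hh'\Phi-K'h^4=h\bigl(2h'\Phi-K'h^3\bigr)=0$, the function $h^2\Phi$ is constant on $[0,+\infty)$. Evaluating at $\alpha=0$ with $h(0)=0$ shows that this constant is zero, hence $h^2\Phi\equiv 0$, and then $\Phi\equiv 0$ on $(0,+\infty)$ since $h>0$ there. Substituting back into $\Phi'=-K'h^2$ gives $K'\equiv 0$, so $K\equiv K_0$ is constant. Together with $h(0)=0$, $h'(0)=1$ and the Jacobi equation $h''=-K_0h$, this pins down $h(\alpha)$ as one of $\alpha$, $\sin(\sqrt{K_0}\alpha)/\sqrt{K_0}$, $\sinh(\sqrt{-K_0}\alpha)/\sqrt{-K_0}$ according to the sign of $K_0$; in every case the target model is a $4$-dimensional space form.

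\medskip

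The main obstacle is the algebraic guess: it is not evident a priori that the cubically nonlinear third-order ODE \eqref{biharmonic+conformal} for $h$ should collapse so drastically. Once one passes to the variables $(K,\Phi)$ and notices the pair of identities $-K'h^3+2h'\Phi=0$ and $\Phi'=-K'h^2$, however, $h^2\Phi$ becomes a conserved quantity, the constant of integration is forced to be zero by the pole behaviour, and the theorem follows without any further analytic work.
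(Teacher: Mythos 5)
Your proof is correct and follows essentially the same route as the paper: your conserved quantity $h^2\Phi=h^2\bigl(1-(h')^2+h\,h''\bigr)$ is exactly the prime integral \eqref{hamiltonianasuerre4} that the paper asserts, the pole condition $h(0)=0$ forces the constant to vanish in both arguments, and your identity $\Phi'=-K'h^2$ is just the paper's comparison of \eqref{equazionechiavetris} with \eqref{equazionechiavequadris} packaged differently. The only (welcome) difference is that you actually derive the prime integral from \eqref{biharmonic+conformal} via the substitution $h''=-Kh$, whereas the paper states it without derivation.
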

\begin{proof}
First, we observe that
\begin{equation}\label{hamiltonianasuerre4}
   h^2\, (1-h'^2+ h\,h'')=c \,\, ,
\end{equation}
where $c$ is a real constant, is a prime integral of \eqref{biharmonic+conformal}. As the function $h$ satisfies the boundary condition \eqref{boundaryconditionforalfa}, the constant $c$ must be zero and therefore 
%
\begin{equation}\label{equazionechiave}
    1-h'^2+ h\,h''=0 \,\, .
\end{equation}
Next, derivation of \eqref{equazionechiave} leads us to conclude that
\begin{equation}\label{equazionechiavebis}
     h\,h'''-h' \, h''=0 \,\, .
\end{equation}
Now, using the Jacobi equation \eqref{equazionecurvaturaradiale} into \eqref{equazionechiavebis}, we deduce that, 
\begin{equation}\label{equazionechiavetris}
     h'''= -\,K\, h' \,\, .
\end{equation}
On the other hand, taking derivatives on both sides of the Jacobi equation \eqref{equazionecurvaturaradiale}, we obtain
\begin{equation}\label{equazionechiavequadris}
     h'''= -\,K\, h' - K'\,h\,\, .
\end{equation}
Finally, comparing \eqref{equazionechiavetris} and \eqref{equazionechiavequadris}, we conclude that $K' \equiv 0$, from which it follows that  the radial curvature $K$ is constant, 
a fact which implies that the sectional curvature is constant  (see \cite{GW}).
\end{proof}


\section{$G-$invariant biharmonic immersions}\label{G-invariant-immersions}

We shall work in the framework of equivariant differential geometry, as introduced by means of the diagram \eqref{diagrammacommutativo}. Let $I(N)$ be the full isometry group of the riemannian manifold $N$: it is well-known (see \cite{MST}) that $I(N)$ is a Lie group which acts differentiably on $N$. A Lie subgroup $G$ of $I(N)$ is called an {\it isometry group} of $N$ and, following \cite{HL}, we recall that its {\it cohomogeneity} is defined as the codimension in $N$ of the maximal dimensional orbits, also called the {\it principal} orbits (of course, all the orbits are homogeneous spaces, since they are of the type $G \, / \,H$, where $H$ is the stabilizer). The \emph{cohomogeneity of a $G-$invariant submanifold} $M$ of $N$ is defined as the dimension of $M$ minus the dimension of the principal orbits.

We shall focus on the study of hypersurfaces in the case that $N$ is the Euclidean space $\R^n$ and the cohomogeneity of $G$ is two, so that \emph{$G-$invariant hypersurfaces are cohomogeneity one submanifolds}. This type of isometry groups of $\R^n$ have been fully classified in \cite{HL}. In particular, Hsiang and Lawson, developping the work of various famous authors, including Cartan and Weil, divided the cohomogeneity two isometry groups $G$ acting on $\R^n$ into five types according to the geometric shape of their orbit space $Q=\bar{N}=\R^n \, / \,G$, which is a linear cone in $\R^2$ of angle $\pi \slash d$, $d=1,\,2,\,3,\,4,\,6$ respectively. In this context, a $G-$invariant hypersurface in $\R^n$ can be completely described by means of its profile curve $\gamma$ into the $2$-dimensional orbit space $Q$. In particular, as a special instance of the reduction theory illustrated in \secref{sezvarapproach}, it turns out that a $G-$invariant hypersurface is biharmonic  if and only if the curve $\gamma$ satisfies a certain system of ordinary differential equations (see Proposition~\ref{equazioniridottedibiarmonicity} below for details). In general, as we pointed out in the introduction, reduction to an ODE has been a valuable tool because it has helped to produce new solutions, such as counterexamples for Bernstein's type problems for minimal and CMC submanifolds (see, for example, \cite{BDG,ER, Hsiang}). By contrast, in our case what we obtain (see \ref{Main-theorem} below) is a nonexistence result in the direction of the still open Chen conjecture (see \cite{Chen04,Chen} and \cite[Chapter~7]{Chen-book}): {\it biharmonic submanifolds into $\R^n$ are minimal}.

Chen's conjecture is still open even for biharmonic hypersurfaces in $\R^n$ though, by a result of Dimitric (see \cite{dimitric}), we know that any biharmonic hypersurface in $\R^n$ with at most two distinct principal curvatures is minimal. Other partial results for low dimensions state that biharmonic hypersurfaces with at most three distinct principal curvatures in $\R^4$ or in $\R^5$ are necessarily minimal (see \cite{Defever, HasVla95}) and very recently Fu  (see \cite{Fu2014}, \cite{Fu2014A}) extended Dimitric's result proving that any biharmonic hypersurface in $\R^n$ with at most three distinct principal curvatures is minimal.  We can now state the main nonexistence result in this context (see \cite{MOR-G-Invariant-biharm-immersions}):

\begin{theorem}\label{Main-theorem} Let $G$ be a cohomogeneity two group of isometries acting on $\R^n$ ($n\geq3$). Then any $G-$invariant biharmonic hypersurface in $R^n$ is minimal.
\end{theorem}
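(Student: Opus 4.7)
The plan is to apply Theorem~\ref{th: bih subm N} with $N=\R^n$, so that the Ricci terms vanish and biharmonicity becomes the coupled system \eqref{eq: caract_bih_normal}--\eqref{eq: caract_bih_tangent}, together with the reduction framework of \secref{sezvarapproach}: a $G$-invariant hypersurface $M\subset\R^n$ is determined by a profile curve $\gamma(s)$ (arclength parametrized) in the two-dimensional cone orbit space $Q=\R^n/G$. By $G$-invariance the mean curvature function $f$ depends only on $s$, and $\grad f=f'(s)\,T$, where $T$ is the horizontal lift of $\gamma'(s)$ to $M$. Crucially, because $G$ acts by isometries and preserves the orbital tangent space, $T$ is an eigenvector of the shape operator $A$, with eigenvalue $k(s)$ equal to the geodesic curvature of $\gamma$ in $Q$; the remaining eigenvalues $k_1(s),\dots,k_{n-2}(s)$ in the orbital directions are determined explicitly by the Hsiang-Lawson classification of principal orbits.

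With this structure in hand, the tangential equation \eqref{eq: caract_bih_tangent} collapses to the scalar identity $f'(s)\bigl(2k(s)+f(s)\bigr)=0$. On any open subinterval where $f'\equiv 0$, the function $f$ is constant and \eqref{eq: caract_bih_normal} becomes $f|A|^2=0$, forcing $f=0$ (the alternative $|A|^2=0$ is totally geodesic). By real analyticity it therefore suffices to rule out the existence of an open interval on which $f,f'\neq 0$ and $k=-f/2$. On such an interval I would insert $k=-f/2$ into $f=k+\sum_i k_i$ to deduce $\sum_i k_i=\tfrac{3}{2}f$ and $|A|^2=\tfrac{1}{4}f^2+\sum_i k_i^2$, and compute $\Delta_M f=-f''(s)-(V'(s)/V(s))\,f'(s)$, where $V(s)$ is the volume of the principal orbit through $\gamma(s)$. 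Plugging these into \eqref{eq: caract_bih_normal} yields a fourth-order ODE for $f$ whose coefficients depend only on the prescribed orbit geometry at $\gamma(s)$, while the constraint $k=-f/2$ couples back to $\gamma$ itself via the Frenet-type equations in $Q$, producing an overdetermined system for the pair $(\gamma,f)$.

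The main obstacle is to show that this system admits only the trivial solution $f\equiv 0$ in each of the five Hsiang-Lawson types $d=1,2,3,4,6$. For $d=1$ (orbits $S^{n-2}$) the hypersurface has at most two distinct principal curvatures and one can quote Dimitric~\cite{dimitric}; for $d=2$ (orbits $S^{p-1}\times S^{q-1}$) there are at most three, and Fu's theorem~\cite{Fu2014,Fu2014A} applies. The genuinely new content is in the exceptional cases $d=3,4,6$, where the orbits are isoparametric hypersurfaces carrying additional principal curvatures. There the argument must proceed directly: differentiate the identity $k=-f/2$ along $\gamma$, express $\dot k$ via the structure equations together with the explicit $k_i(s)$ read off from the orbit description, and substitute in the normal equation to obtain polynomial identities in $f,f',f''$ whose coefficients come from the orbital data. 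The hard step is to verify, case by case, that every such identity forces $f\equiv 0$; in contrast to the classical Bernstein-type reductions of \cite{BDG,Hsiang}, where the reduced ODE was rich enough to furnish nontrivial counterexamples, here rigidity wins and yields minimality.
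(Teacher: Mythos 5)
Your reduction is the correct one and matches Proposition~\ref{equazioniridottedibiarmonicity}: the tangential condition does collapse to $\dot f\,(f+2k_d)=0$ with $k_d=\ddot y\,\dot x-\ddot x\,\dot y$ the curvature of the profile curve, the CMC branch is disposed of exactly as you say (constant $f\neq 0$ forces $|A|^2=0$, which is incompatible with $f\neq 0$), and quoting \cite{dimitric} for $d=1$ and \cite{Fu2014,Fu2014A} (or \cite{MOR-AMPA}) for $d=2$ is legitimate, since $\Sigma_\gamma$ then has at most two, respectively three, distinct principal curvatures. The gap is that for $d=3,4,6$ --- which you correctly identify as the only genuinely new cases --- your proposal stops exactly where the proof has to begin. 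Declaring that the substitution $k_d=-f/2$ into \eqref{bitensionecomponentenormale} ``produces an overdetermined system'' and that ``the hard step is to verify, case by case, that every such identity forces $f\equiv 0$'' describes the problem rather than solving it: nothing you wrote explains why that overdetermined system cannot admit a nontrivial local solution, and a priori it could --- this is precisely the situation in which the analogous reductions for minimal and CMC hypersurfaces in \cite{BDG,Hsiang} \emph{do} yield nontrivial solutions, so rigidity cannot be asserted by analogy.

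The mechanism that closes this gap in \cite{MOR-G-Invariant-biharm-immersions} (the present paper only sketches it and defers the details) is an idea of Hasanis--Vlachos \cite{HasVla95}: $G$-invariance guarantees that, after feeding the tangential information $f+2k_d=0$ into \eqref{bitensionecomponentenormale}, both biharmonicity conditions can be rewritten so that all coefficients are \emph{homogeneous} polynomials in $(x,y)$ built from the linear forms $w_{(d,i)}$ of \eqref{definizionew_{(d,i)}}; the assumption that $\gamma$ yields a local proper biharmonic solution is then shown to contradict a statement in algebraic geometry about the \emph{resultant} of this family of homogeneous polynomials. That homogeneity-plus-resultant argument is the actual content of the theorem for $d=3,4,6$, and it is also what allows all five types to be treated uniformly rather than by separate case analyses. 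Without it, or some substitute for it, your proposed case-by-case verification has no engine to run on, so the proof is incomplete at its essential step.
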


\begin{remark}The family of $G-$invariant hypersurfaces in Theorem~\ref{Main-theorem} is ample and geometrically significant (see Table~\ref{table} below). The case $d=1$ (i.e., the case of the classical rotational hypersurfaces) is a special instance of the above cited result of \cite{dimitric}. The next case, i.e., $d=2$ and $G=SO(p)\times SO(q)$, with $p+q=n$, was proved in \cite{MOR-AMPA}. For these reasons, the interest of Theorem~\ref{Main-theorem} lies on the three remaining types of $G-$actions, i.e., $d=3,\,4$ and $6$, for which the number of distinct principal curvatures is $4,\,5$ and $7$ respectively.
\end{remark}

In order to set Theorem~\ref{Main-theorem} in our reduction context it is convenient to begin by recalling some basic facts from equivariant differential geometry. The details, together with some historical references, concerning these results can be found in \cite{BackDoCarmoHsiang, HL, Pedrosa}. So, let $G$ be a cohomogeneity two group of isometries acting on $\R^n$. The following linear functions $w_{(d,i)}$ will play an important role:
\begin{equation}\label{definizionew_{(d,i)}}
    w_{(d,i)}(x,y)= x\, \sin (i\,\pi \slash \,d)\,-\, y\,\cos (i\,\pi \slash \,d)\,\, ,
\end{equation}
where $d$ is an integer which can be equal to $1,\,2,\,3,\,4$ or $6$, and $i$ is another integer such that $0 \leq i \leq (d-1)$. The orbit space $\bar{N}=Q=\R^n\, / \,G$ can be identified with a linear cone of angle $(\pi \slash \,d)$ in $\R^2$ described by
\begin{equation}\label{descrizionediQ}
    Q=\left \{\, (x,y) \,\in \, \R^2 \,\, : \,\, y \geq 0 \,\,{\rm and}\,\,x\, \sin (\pi \slash \,d)\,-\, y\,\cos (\pi \slash \,d)\geq 0  \,\right \} \,\,,
\end{equation}
where the possible cases for $d$ are $1,\,2,\,3,\,4$ or $6$. The orbital distance metric $g_Q$ on $Q$ (i.e., the metric which makes the projection map $\pi_2 \,\, : \R^n \rightarrow Q$ in the diagram \eqref{diagrammacommutativo} a Riemannian submersion) is flat:
\begin{equation}\label{metricadiQ}
    g_Q=dx^2+dy^2
\end{equation}
and any horizontal lift of a tangent vector to $Q$ meets any $G-$orbit perpendicularly.

Let $\xi=(x,y)$ be an interior point of $Q$. We denote by $V(\xi)$ the volume of the principal orbit $\pi_2^{\,-1}\,(\xi)$. The function $V(\xi)$ is called the \emph{volume function} and contains most of the information required to carry out the computation of the second fundamental form $A$ associated to a $G-$invariant hypersurface. More precisely, it turns out that $V(x,y)$ is always a homogeneous polynomial which, for each fixed type $d$, can be expressed (up to a multiplicative constant) in terms of the linear functions \eqref{definizionew_{(d,i)}} in the following form:
\begin{equation}\label{genericafunzionevolume}
    V^2(x,y)= \prod_{i=0}^{(d-1)}\,  \left [\,w_{(d,i)}(x,y)\,\right ]^{2\,m_i} \,\, ,
\end{equation}
where the $m_i$'s are positive integers (the cases which can occur are listed in Table~\ref{table}, which can be derived from an analogous table given in \cite{Hsiang,HL}).
\begin{table}[b]
  \centering
  \scriptsize{
\begin{tabular}{|c|c|c|c|c|}
\hline
&&&&\\
  $G$ & Action & $\dim$ Euclidean Space  & $d$ & Multiplicities \\
  &&&&\\
  \hline
  &&&& \\
  $S\mathcal{O}(n-1)$&$1+\rho_{(n-1)}$&$n\geq3$&1&$m_0=(n-2),\,m_1=1$\\
   &&&& \\
  \hline
  &&&& \\
  $S\mathcal{O}(p)\times S\mathcal{O}(q)$&$\rho_p+\rho_{q}$&$n=p+q, \,p,\,q \geq 2$&2&$m_0=(q-1),\,m_1=(p-1)$\\
  &&&& \\
  \hline
&&&&\\
  $S\mathcal{O}(3)$ &$S^2_{\rho_{3}}-1$ &  $n=5$ & 3 & $m_0=m_1=m_2=1$ \\
  &&&&\\
  \hline
  &&&&\\
  $SU(3)$ &$Ad$ &  $n=8$ & 3 & $m_0=m_1=m_2=2$ \\
  &&&&\\
  \hline
  &&&&\\
  $Sp(3)$ & $\Lambda^2 \, \nu_{3}-1$& $ n=14$ & 3 & $m_0=m_1=m_2=4$ \\
  &&&&\\
  \hline
  &&&&\\
  $F_4$ & $\begin{array}{l}
             1 \\
             \circ \hspace{-1.3mm}- \hspace{-1.3mm}\circ \hspace{-1.4mm}=\hspace{-1.4mm}  \circ\hspace{-1.3mm} -\hspace{-1.3mm} \circ
           \end{array}
  $& $ n=26 $& 3 & $m_0=m_1=m_2=8$ \\
  &&&&\\
  \hline
  &&&&\\
  $S\mathcal{O}(5)$ & $Ad$& $ n=10$ & 4 & $m_0=m_1=m_2=m_3=2$ \\
  &&&&\\
  \hline
  &&&&\\
  $S\mathcal{O}(2)\times S\mathcal{O}(m)$ &$\rho_2 \otimes \rho_m $& $ n=2m \,\geq 6$ & 4 & $m_0=m_2=(m-2), \, m_1=m_3=1$ \\
  &&&&\\
  \hline
  &&&&\\
  $S \left ( U(2)\times U(m) \right )$ &$[\mu_2 \otimes _{\C}\mu_m]_{\R}$ & $ n=4m \,\geq 8$ & 4 & $m_0=m_2=(2m-3), \, m_1=m_3=2$ \\
  &&&&\\
  \hline
  &&&&\\
  $Sp(2) \times Sp(m)$&$ \nu_2 \otimes _{\H}\nu_m^*$& $ n=8m \,\geq 16$ & 4 & $m_0=m_2=(4m-5), \, m_1=m_3=4$ \\
  &&&&\\
  \hline
  &&&& \\
 $U(5)$ &$[\Lambda^2 \mu_5]_{\R}$&$n=20$&$4$&$m_0=m_2=5, \, m_1=m_3=4$\\
 &&&& \\
  \hline
  &&&&\\
 $ U(1)\times Spin(10)$ &$[\mu_1 \otimes _{\C}\Delta_1^+]_{\R}$ & $ n=32$ & 4 & $m_0=m_2=9, \, m_1=m_3=6$ \\
  &&&&\\
  \hline
  &&&&\\
  $G_2$ & $Ad$& $ n=14$ & 6 & $m_0=m_1= \dots =m_5=2$ \\
  &&&&\\
  \hline
  &&&&\\
  $S\mathcal{O}(4)$ &$\begin{array}{l}
              1 \hspace{1.3mm}3\\
             \circ\hspace{-1.3mm} - \hspace{-1.3mm} \circ
           \end{array} $ & $ n=8$ & 6 & $m_0=m_1= \dots =m_5=1$ \\
  &&&&\\
  \hline
  \end{tabular}
  }
  $$
  \,
  $$
  \caption{Cohomogeneity two $G-$actions on $\R^n$ (see \cite{Hsiang,HL}) (\textbf{Note}: the volume function is given in \eqref{genericafunzionevolume}, the number of distint principal curvatures of $ \Sigma_{\gamma}$ is $(d+1)$ ).}\label{table}
\end{table}

Next, we observe that any cohomogeneity one $G-$invariant hypersurface into $\R^n$ can be described by means of what we call its \emph{profile curve} $\bar{\varphi}=\gamma(s)=(x(s),\,y(s))$ in the orbit space $Q$. More precisely, the $G-$invariant hypersurface corresponding to a profile curve $\gamma$ is $M=\Sigma_{\gamma}= \pi_2^{-1}(\gamma)$. For convenience, we shall now assume that
\begin{equation}\label{sascissacurvilinea}
    \dot{x}^2+\dot{y}^2 =1 \,\, ,
\end{equation}
and also, to fix orientation, that the unit normal $\eta$ to the hypersurface $\Sigma_{\gamma}$ projects down in $Q$ to
\begin{equation}\label{unitnormal}
    d\pi_2\,(\eta)=\nu =-\,\dot{y}\,\frac{\partial}{\partial x}+ \dot{x}\,\frac{\partial}{\partial y}\,\, .
\end{equation}

Now, suppose that $\Sigma_{\gamma}$ is a $G-$invariant hypersurface into $\R^n$ of type $d$ ($d=1,\,2,\,3,\,4$ or $6$), with associated volume function given by \eqref{genericafunzionevolume}. Then $\Sigma_{\gamma}$ possesses $(d+1)$ distinct principal curvatures given by:

\begin{eqnarray}\label{curvatureprincipali}
    k_i &=&-\, \frac{1}{2} \, \frac{d}{d \, \nu}\, \ln \left[w_{(d,i)}(x,y) \right]^2\nonumber\\
    &=& -\, \frac{1}{2}\,  \nu\left(\ln \left[w_{(d,i)}(x,y) \right]^2\right)\,\, , \quad i=0,\, \ldots,\, (d-1) \,\,,
\end{eqnarray}
each of them with multiplicity equal to $m_i$, and
\begin{equation}\label{ultimacurvaturaprincipale}
    k_d =  \ddot{y}\,\dot {x}\,-\,\ddot{x}\,\dot {y} \,\,
\end{equation}
with multiplicity equal to one. We also observe that, using \eqref{definizionew_{(d,i)}} and \eqref{unitnormal}, \eqref{curvatureprincipali} gives
\begin{eqnarray}\label{curvatureprincipali-bis}
    k_i &=& \frac{\dot{y}  \sin (i\,\pi \slash \,d)+\dot{x}  \cos (i\,\pi \slash \,d)}{w_{(d,i)}(x,y)}\nonumber\\
    &=&\frac{w_{(d,i)}(\dot{y},-\dot{x})}{w_{(d,i)}(x,y)}\,\, , \quad i=0,\, \ldots,\, (d-1) \,\,.
\end{eqnarray}
In particular, it follows that the terms $f$ and $|A|^2$ in \eqref{eq: caract_bih_normal} and \eqref{eq: caract_bih_tangent} are given by
\begin{equation}\label{espressionedif}
   f=(\, \ddot{y}\,\dot {x}\,-\,\ddot{x}\,\dot {y}\,)+\sum_{i=0}^{(d-1)}\,m_i\, k_i
\end{equation}
and
\begin{equation}\label{espressionediA^2}
    |A|^2 = (\, \ddot{y}\,\dot {x}\,-\,\ddot{x}\,\dot {y}\,)^2+ \sum_{i=0}^{(d-1)}\,m_i\, (k_i)^2
\end{equation}
respectively, where the explicit expressions for the $k_i$'s are those given in \eqref{curvatureprincipali-bis}. We shall need to compute the gradient and the laplacian of $f$: to this purpose, we work by using on $\Sigma_{\gamma}$ a local system of coordinates of type $\{u_1,\ldots,u_{n-2},s\}$, where   $u=\{u_1,\ldots,u_{n-2}\}$ are local coordinates of a principal orbit. In particular, we observe that, with respect to these local coordinates, the induced metric $g$ satisfies:
\begin{equation}\label{inducedmetric1}
  \det g = \psi(u)\, V^2(x(s),y(s))\,,
\end{equation}
where $\psi$ is a positive function on the principal orbit and
 \begin{equation}\label{inducedmetric2}
  g_{(n-1),k}=\delta_{(n-1),k}\,,\quad k=1,\ldots,\,(n-1)\,\, .
\end{equation}
Now, since $f$ depends only on $s$, it follows immediately that
\begin{equation}\label{gradientef}
\grad f = \dot {f}(s) \,\frac{\partial}{\partial s} \,\,.
\end{equation}
Next, by using \eqref{inducedmetric1} and \eqref{inducedmetric2} in
$$
\Delta f = -\,\frac{1}{\sqrt{\det g}} \,\frac{\partial}{\partial v_i}\left( g^{ij}\,\sqrt{\det g}\, \frac{\partial f}{\partial v_j}\right)\,,\quad v_i=u_i, i=1,\ldots,\,(n-2),\;\; v_{(n-1)}=s\,\,,
$$
we obtain
\begin{equation}\label{gradienteelaplaciano}
 \Delta f =- \ddot{f}- \frac{1}{2}\,\left(\frac{d}{d \, s}\, \ln V^2\right ) \, \dot{f}\,\, .
\end{equation}
We can summarize this discussion in the following proposition which, taking into account that the Ricci tensor field of $\R^n$ vanishes, follows by direct substitution of \eqref{espressionedif}, \eqref{espressionediA^2}, \eqref{gradientef} and \eqref{gradienteelaplaciano} into \eqref{eq: caract_bih_normal} and \eqref{eq: caract_bih_tangent}:
\begin{proposition}\label{equazioniridottedibiarmonicity} Let $\Sigma_{\gamma}$ be a $G-$invariant hypersurface into $\R^n$ of type $d$ ($d=1,\,2,\,3,\,4$ or $6$), with associated volume function given by \eqref{genericafunzionevolume}. Then $\Sigma_{\gamma}$ is  biharmonic  if and only if
 \begin{equation}\label{bitensionecomponentenormale}
  \ddot{f}+ \frac{1}{2}\,\left(\frac{d}{d \, s}\, \ln V^2\right ) \, \dot{f}\, - \Big[(\, \ddot{y}\,\dot {x}\,-\,\ddot{x}\,\dot {y}\,)^2+ \sum_{i=0}^{(d-1)}\,m_i\, (k_i)^2\Big]\,f =0
 \end{equation}
and
\begin{equation}\label{bitensionecomponentetangente}
    \dot{f}\, (f+2\,(\, \ddot{y}\,\dot {x}\,-\,\ddot{x}\,\dot {y}\,)) =0 \,\, .
 \end{equation}
 \end{proposition}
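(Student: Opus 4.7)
The plan is to apply Theorem~\ref{th: bih subm N} with $N=\R^n$. Since the Ricci curvature of $\R^n$ vanishes, the normal and tangent components \eqref{eq: caract_bih_normal}--\eqref{eq: caract_bih_tangent} simplify to
\begin{equation*}
\Delta f + f\,|A|^2 = 0 \qquad\text{and}\qquad 2\,A(\grad f) + f\,\grad f = 0.
\end{equation*}
All the data entering these equations have already been prepared in the preceding paragraphs, so the proof reduces to a direct substitution, which I would carry out for the two components separately.

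For the normal part, the expressions \eqref{espressionedif} and \eqref{espressionediA^2} give $f$ and $|A|^2$ as weighted sums of the principal curvatures enumerated in \eqref{curvatureprincipali-bis} and \eqref{ultimacurvaturaprincipale}. Since $f$ depends only on the arclength parameter $s$ along the profile curve, the gradient collapses to $\grad f=\dot{f}\,\partial/\partial s$ as in \eqref{gradientef}. Working in the adapted chart $(u_1,\dots,u_{n-2},s)$ where the induced metric satisfies \eqref{inducedmetric1}--\eqref{inducedmetric2}, the coordinate expression of $\Delta$ reduces to its radial part, producing formula \eqref{gradienteelaplaciano} for $\Delta f$. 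Plugging these three ingredients into $\Delta f + f\,|A|^2=0$ and multiplying through by $-1$ (to match the sign convention $\Delta h=-h''$ of the paper) yields precisely \eqref{bitensionecomponentenormale}.

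For the tangent part, the key observation is that $\partial/\partial s$ is an eigenvector of the shape operator of $\Sigma_\gamma$ with eigenvalue $k_d=\ddot{y}\dot{x}-\ddot{x}\dot{y}$; indeed, the orbital directions carry the principal curvatures $k_0,\dots,k_{d-1}$ with multiplicities $m_i$, while the horizontal lift of the unit tangent to $\gamma$ is the remaining principal direction, with associated principal curvature equal to the geodesic curvature of $\gamma$ in the flat orbit space $Q$. Hence $A(\grad f)=k_d\,\dot{f}\,\partial/\partial s$, and the tangent equation collapses to $(2k_d+f)\,\dot{f}=0$, which is exactly \eqref{bitensionecomponentetangente}. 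The only step that is not purely mechanical is this geometric identification of the principal directions of $\Sigma_\gamma$, which is however already implicit in the enumeration \eqref{curvatureprincipali}--\eqref{ultimacurvaturaprincipale}; once it is in hand, both equations follow by bookkeeping, and I anticipate no genuine analytic difficulty.
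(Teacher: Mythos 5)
Your proposal is correct and coincides with the paper's primary derivation of Proposition~\ref{equazioniridottedibiarmonicity}: direct substitution of \eqref{espressionedif}, \eqref{espressionediA^2}, \eqref{gradientef} and \eqref{gradienteelaplaciano} into the two equations of Theorem~\ref{th: bih subm N}, using that the Ricci tensor of $\R^n$ vanishes and that $\partial/\partial s$ is the remaining principal direction with curvature $k_d=\ddot{y}\dot{x}-\ddot{x}\dot{y}$. (The paper also supplies a second, alternative proof via the reduced bienergy Lagrangian and the Euler--Lagrange system \eqref{eqeulero2vettoriale}, but your route is the one the paper itself uses to establish the proposition.)
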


 \begin{remark}\label{re:linearprifile} Here we consider briefly the case where the profile curve $\gamma(s)=(x(s),y(s))$ in $Q$ satisfies $y=m x$, $m\in\R$. Therefore, we can assume that $\gamma$ is parametrized by
\begin{equation}\label{parametrizzazione}
\gamma (s)= \big( \,s \, \cos \sigma,\, s \, \sin \sigma\, \big ) \,\,,
\end{equation}
where $\sigma$ is a constant in the interval $(0,\,\pi \slash d)$. In this case, we have
\begin{equation}\label{condizioneminimalita}
    f=-\,\frac{1}{s}\,\,\sum_{i=0}^{(d-1)} \, m_i \, \cot \left (\sigma -\,\frac{i\pi}{d} \right ) \,\, .
\end{equation}
In particular, by using \eqref{parametrizzazione} and \eqref{condizioneminimalita} into \eqref{bitensionecomponentetangente}, it is immediate to deduce that a curve of this type gives rise to a solution if and only if $f \equiv 0$. In other words, for this family of curves, the associated $\Sigma_{\gamma}$ is biharmonic if and only if it is minimal.
\end{remark}

 \begin{example}\label{esempio} Equations \eqref{bitensionecomponentenormale} and \eqref{bitensionecomponentetangente} express, respectively, the vanishing of the normal and of the tangential component of the bitension field. In order to help the reader, we now compute explicitly the various principal curvatures and the biharmonicity conditions in one specific instance. More precisely, suppose that $\Sigma_{\gamma}$ is a $G-$invariant hypersurface into $\R^n$ of type $d=4$, with $G=U(5)$, $n=20$ (see Table~\ref{table}). In this case we have:
 $$
 w_{(4,0)}(x,y)= -y \,\, , \quad w_{(4,1)}(x,y)= \frac{\sqrt 2}{2}\,(x-y)\, ,
 $$
 $$
w_{(4,2)}(x,y)= x \,\, , \quad w_{(4,3)}(x,y)= \frac{\sqrt 2}{2}\,(x+y)\,,
 $$
 with $m_0=m_2=5$, $m_1=m_3=4$ (note that $1+\sum_{i=0}^{(d-1)}m_i= \dim (\Sigma_\gamma)=(n-1)=19$). Then we have:
$$
 \begin{aligned}
  k_0& =-\, \frac{1}{2} \, \frac{d}{d \, \nu}\, \ln \left[w_{(4,0)}(x,y)\right ]^2 \\
 &= -\, \frac{1}{w_{(4,0)}(x,y)} \left (- \dot{y}\,\frac{\partial}{\partial x}w_{(4,0)}(x,y) + \dot{x}\,\frac{\partial}{\partial y}w_{(4,0)}(x,y)\right ) = -\,\frac{\dot x}{y} \,\, ,
\end{aligned}
$$
 and, similarly,
 $$
  k_1=\frac{\dot{x}+\dot{y}}{x-y},\,\, \quad k_2= \frac{\dot y}{x}, \,\, \quad k_3=\frac{-\dot{x}+\dot{y}}{x+y} \,\, .
 $$
 Moreover, up to an irrelevant multiplicative constant,
 $$
  V^2(x,y)= \prod_{i=0}^{3}\,  \left [\,w_{(4,i)}(x,y)\,\right ]^{2\,m_i}= (xy)^{10} \, (x^2-y^2)^8 \,\, .
 $$
 Therefore, taking into account  \eqref{espressionedif}, \eqref{espressionediA^2} and \eqref{gradienteelaplaciano} the biharmonicity equations \eqref{bitensionecomponentenormale} and \eqref{bitensionecomponentetangente} become
 \begin{equation}\label{bitensionecomponentenormaleesempio}
  \ddot{f}+\,\left( 5\,\frac{\dot{x}}{x}+5\,\frac{\dot{y}}{y}+4\,\frac{\dot{x}+\dot{y}}{x+y}+4\,\frac{\dot{x}-\dot{y}}{x-y} \right ) \, \dot{f}\, - |A|^2\,f =0
 \end{equation}
and
 \begin{equation}\label{bitensionecomponentetangenteesempio}
    \dot{f}\, (f+2\,(\ddot{y}\,\dot {x}\,-\,\ddot{x}\,\dot {y})) =0
 \end{equation}
 respectively, where
 \begin{equation}\label{f-esempio}
 f= (\ddot{y}\,\dot {x}\,-\,\ddot{x}\,\dot {y})-5\, \frac{\dot x}{y}+4\,\frac{(\dot{x}+\dot{y})}{(x-y)}+5\, \frac{\dot y}{x}+4\,\frac{(-\dot{x}+\dot{y})}{(x+y)}
 \end{equation}
 and
 \begin{equation}\label{A^2esempio}
 |A|^2= (\ddot{y}\,\dot {x}\,-\,\ddot{x}\,\dot {y})^2+\,5\, \left(\frac{\dot x}{y}\right)^2+4\,\left(\frac{\dot{x}+\dot{y}}{x-y}\right)^2+5\, \left(\frac{\dot y}{x}\right)^2 +4 \,\left( \frac{-\dot{x}+\dot{y}}{x+y}\right )^2 \,\,.
 \end{equation}
 \end{example}

Proposition~\ref{equazioniridottedibiarmonicity} above was derived by computing directly the various terms in the two equations of Theorem~\ref{th: bih subm N} by means of the equivariant geometric results of \cite{Pedrosa}, \cite{BackDoCarmoHsiang}. This approach, which was followed in \cite{MOR-G-Invariant-biharm-immersions}, is of interest because it displays explicitly the principal curvatures and the structure of the second fundamental form of $\Sigma_{\gamma}$. The end of the proof of Theorem~\ref{Main-theorem} can be found in \cite{MOR-G-Invariant-biharm-immersions} and
so we do not include it here. We limit ourselves to say that it uses an idea introduced in \cite{HasVla95}: essentially, by replacing the information given by equation \eqref{bitensionecomponentetangente} into \eqref{bitensionecomponentenormale}, we find that assuming that $\gamma$ produces a local, proper biharmonic solution leads us to contradicting a result in algebraic geometry concerning the \emph{resultant} of a family of homogeneous polynomial.
It is worth to point out that, despite the complexity of the calculations involved, this argument covers together all the cases $d=1,\,2,\,3,\,4$ and $6$. A key point for this purpose is the fact that $G-$invariance is sufficient to guarantee that the biharmonicity conditions \eqref{bitensionecomponentenormale} and \eqref{bitensionecomponentetangente} can be riformulated in a suitable form which displays coefficients all given by certain \emph{homogeneous} polynomials.

\begin{remark} The proof of Theorem~\ref{Main-theorem} is of a local nature. However, we point out that in \cite{Maeta} and \cite{NUG} the authors obtained some global nonexistence results for proper biharmonic complete submanifolds.
\end{remark}

Now, in the spirit of the reduction theory of the present paper, we find it useful to provide here an \textbf{alternative proof of Proposition~\ref{equazioniridottedibiarmonicity}}, that is an alternative way of computing the tangent and normal components of the bitension field of a $G-$invariant hypersurface into $\R^n$, based on the development of the ideas of Remark~\ref{remark-equivariance-per-immersions}. In this context, the relevant calculations can be conveniently  carried out by using a suitable software for symbolic computations.
\begin{proof}
First, we give the proof of Proposition~\ref{equazioniridottedibiarmonicity} in the instance $d=2$. After this, we shall indicate the modifications which are necessary to cover the remaining cases. So, let us assume that $d=2$: then, according to Table~\ref{table}, $G=S\mathcal{O}(p)\times S\mathcal{O}(q)$ and $\Sigma_{\gamma}$ is defined by the isometric immersion:
$$
\varphi_{\gamma}:\s^{p-1}\times \s^{q-1}\times I \hookrightarrow \R^n\,, \quad \varphi_{\gamma}(v,w,s)=(x(s) v, y(s) w)\,.
$$
Locally, assuming that the curve $\gamma$ is in the interior of the orbit space $Q$, $\Sigma_{\gamma}$ admits an orthonormal frame of the following type:
\begin{equation}\label{orthonormal-frame-su-sigma-gamma-d=2}
   \left \{ \,\,\frac{1}{x(s)}\,v_1,\, \dots,\, \frac{1}{x(s)}\,v_{p-1},\,\,\, \frac{1}{y(s)}\,w_1,\, \dots,\, \frac{1}{y(s)}\,w_{q-1},\, \,\,\frac{1}{\sqrt{\dot{x}^2(s)+\dot{y}^2(s)}}\,
    \frac{\partial}{\partial s} \,\, \right \}\,\, ,
\end{equation}
where
\begin{equation}\label{orthonormal-frame-su-orbita-d=2}
  \left \{\,\,  v_1,\, \dots,\, v_{p-1},\,\, w_1,\, \dots,\, w_{q-1}\,\, \right \}
\end{equation}
is a local orthonormal frame on $\s^{p-1} \times \s^{q-1}$. Now, the key idea is the following: we fix $\Sigma_{\gamma_0}$ defined by a given curve $\gamma_0=[x_0(s),\,y_0(s)]$ which satisfies the additional requirement
\begin{equation}\label{ascissa-curvilinea-gamma-zero}
\dot{x}_0^2(s)+\dot{y}_0^2(s) \equiv 1 \,\, .
\end{equation}
In particular, note that we fix on the domain $\s^{p-1}\times \s^{q-1}\times I$ the induced metric by $\varphi_{\gamma_0}$ which we call $g_0$: modifying \eqref{orthonormal-frame-su-sigma-gamma-d=2}, a local orthonormal frame with respect to $g_0$ is of the form
\begin{equation}\label{orthonormal-frame-su-sigma-gamma-zero-d=2}
   \left \{ \,\,\frac{1}{x_0(s)}\,v_1,\, \dots,\, \frac{1}{x_0(s)}\,v_{p-1},\,\,\, \frac{1}{y_0(s)}\,w_1,\, \dots,\, \frac{1}{y_0(s)}\,w_{q-1},\, \,
    \frac{\partial}{\partial s} \,\, \right \}\,\, .
\end{equation}
Now, by using the reduction Lemma~\ref{adaptationofXin} and a version of Remark~\ref{remark-relazione-tension-volume}, we want to determine the tension field $\tau(\varphi_{\gamma})$ of a $G-$invariant map  $\varphi_{\gamma}\, : \,(\s^{p-1}\times \s^{q-1}\times I,\,g_0)\,\rightarrow \, \R^n$. After this, by setting $\gamma(s)=\gamma_0(s)$ in the expression of $\tau(\varphi_{\gamma})$, it will be easy to deduce that a $G-$invariant submanifold $\Sigma_{\gamma}$ is minimal if and only if its profile curve $\gamma(s)$ is a solution of \eqref{condizio-armonicity-zero} below. Once this will be done, we shall proceed in a similarly fashion for the biharmonic case. However, let us now proceed step by step. First, by a simple rescaling argument which uses \eqref{orthonormal-frame-su-sigma-gamma-d=2} and \eqref{orthonormal-frame-su-sigma-gamma-zero-d=2}, we observe that the reduced energy which controls the harmonicity of a $G-$invariant map $\varphi_{\gamma}\, : \,(\s^{p-1}\times \s^{q-1}\times I,\,g_0)\,\rightarrow \, \R^n$ is:
\begin{equation}\label{energiaridotta-immersioni-d02}
    E(\varphi_\gamma)=\omega_0 \int_a^b \,\, L_{\gamma}(s,\,x, \,\dot{x},\,y,\,\dot{y}) \, ds \,\, ,
\end{equation}
where $\omega_0=\rm{Vol}(\s^{p-1})\, \rm{Vol}(\s^{q-1})$ and the Lagrangian $L_{\gamma}$ is the product of the energy density and the volume term as follows:
\begin{equation}\label{lagrangiana-decisiva-d=2}
    L_{\gamma}(s,\,x, \,\dot{x},\,y,\,\dot{y}) \,\,= \,\, e(\varphi_\gamma)\, \, V(s) \,\,,
\end{equation}
where
\begin{equation}\label{energy-decisiva-d=2}
     e(\varphi_\gamma)\, =\, \frac{1}{2} \, \left [\,\dot{x}^2(s)+\dot{y}^2(s)\,+\,(p-1)\, \frac{x^2(s)}{x_0^2(s)}\,+ \,(q-1)\, \frac{y^2(s)}{y_0^2(s)}\,\right ]
\end{equation}
and
\begin{equation}\label{volume-zero-d=2}
     V(s) \,=\, x_0^{\,\,\,p-1}(s)\,\,y_0^{\,\,\,q-1}(s)
\end{equation}
(note that $x_0(s)$ and $y_0(s)$ in \eqref{energy-decisiva-d=2} and \eqref{volume-zero-d=2} are to be considered as preassigned functions).
Next, we compute the tension field $\tau(\varphi_\gamma)$ with respect to the fixed metric $g_0$. For convenience, we set
\begin{equation}\label{definizione-di-A-x}
    A_{x}\, (s,\,x, \,\dot{x},\,y,\,\dot{y})\,\,=\, \,\frac{\partial L_{\gamma}}{\partial x}- \frac{d}{ds} \,  \frac{\partial L_{\gamma}}{\partial \dot{x}}
\end{equation}
and
\begin{equation}\label{definizione-di-A-y}
    A_{y}\, (s,\,x, \,\dot{x},\,y,\,\dot{y})\,\,=\, \, \frac{\partial L_{\gamma}}{\partial y}-\frac{d}{ds} \,  \frac{\partial L_{\gamma}}{\partial \dot{y}}\,\, .
\end{equation}
Now, we calculate the directional derivative of the energy functional  (compare with Remark~\ref{remark-relazione-tension-volume}).

Let $\beta:I\to \R^2$, $\beta(s)=(\beta_x(s),\beta_y(s))$,  be a smooth curve that vanishes outside $(a,b)$. We have
\begin{eqnarray}\label{equation-cezar1}
\frac{d}{dh} \, \left [  E(\,\varphi_{(\gamma \, +\, h\, \beta) }\,)\,\right ] _{|_{h=0}}&=&\omega_0  \int_a^b  
\frac{d}{dh}\left ( L_{\gamma}(s, x+h\beta_x, \dot{x}+h \dot{\beta}_x,  y+h\beta_y, \dot{y}+h \dot{\beta}_y\right )\Big {|}_{h=0}ds\nonumber\\
&=&  \omega_0 \int_a^b \left[\left(\frac{\partial L_{\gamma}}{\partial x}-\frac{d}{ds} \,  \frac{\partial L_{\gamma}}{\partial \dot{x}} \right) \beta_x +  \left(\frac{\partial L_{\gamma}}{\partial y}-\frac{d}{ds} \,  \frac{\partial L_{\gamma}}{\partial \dot{y}} \right) \beta_y \right] ds\,.\nonumber\\
\end{eqnarray}
On the other hand, the variational vector field along $\varphi_{\gamma}$ corresponding to the variation $\{\varphi_{\gamma+h \beta}\}_h$ is 
$$
{\mathcal V}= \beta_x \frac{\partial}{\partial x}+\beta_y \frac{\partial}{\partial y}\,.
$$
Thus 
\begin{eqnarray}\label{equation-cezar2}
\frac{d}{dh} \, \left [  E(\,\varphi_{(\gamma \, +\, h\, \beta) }\,)\,\right ] _{|_{h=0}}&=&-\int_{\Sigma_{\gamma_0}} \langle \tau(\varphi_{\gamma}), {\mathcal V} \rangle\,  v_{g_0}\nonumber\\
&=& -\omega_0 \int_a^b \left( \tau_x \beta_x+\tau_y \beta_y \right) V(s) \, ds\,,
\end{eqnarray}
where 
\begin{equation}\label{relazione-tra-tau-e-eq-eulero-per-immersioni}
    \tau (\varphi_{\gamma})= \tau_{x}\,\,\, \frac{\partial}{\partial x} \,\,+\,\,\tau_{y} \,\, \frac{\partial}{\partial y} \,\, .
\end{equation}
Comparing  \eqref{equation-cezar1} and \eqref{equation-cezar2}, using \eqref{definizione-di-A-x} and \eqref{definizione-di-A-y}, we deduce that

\begin{equation}\label{tau-x}
   \tau_{x}\,\,(s,\,x, \,\dot{x},\,y,\,\dot{y})\,\,=-\,\frac{A_{x}\,\,(s,\,x, \,\dot{x},\,y,\,\dot{y})\,}{V(s)}
\end{equation}
and
\begin{equation}\label{tau-y}
   \tau_{y}\,\,(s,\,x, \,\dot{x},\,y,\,\dot{y})\,\,=-\,\frac{A_{y}\,\,(s,\,x, \,\dot{x},\,y,\,\dot{y})\,}{V(s)} \,\, .
\end{equation}

A computation shows that the explicit expressions of \eqref{tau-x} and \eqref{tau-y} are:
\begin{equation}\label{tau-x-esplicita}
  \tau_{x}=\ddot{x}(s)+(p-1)\frac{\dot{x}(s)
   \dot{x}_0(s)}{x_0(s)}
   + (q-1)\frac{\dot{x}(s)
   \dot{y}_0(s)}{y_0(s)}
   -(p-1)\frac{x(s)}
   {x_0(s)^2}    \end{equation}
and
\begin{equation}\label{tau-y-esplicita}
  \tau_{y}=\ddot{y}(s)+(p-1)\frac{\dot{y}(s)
   \dot{x}_0(s)}{x_0(s)}
   + (q-1)\frac{\dot{y}(s)
   \dot{y}_0(s)}{y_0(s)}
   -(q-1)\frac{y(s)}
   {y_0(s)^2}
\end{equation}
Next, as announced above, we can set $x(s)=x_0(s),\,y(s)=y_0(s)$ in \eqref{relazione-tra-tau-e-eq-eulero-per-immersioni} and simplify \eqref{tau-x-esplicita}, \eqref{tau-y-esplicita} taking into account the assumption \eqref{ascissa-curvilinea-gamma-zero}, which now becomes
\begin{equation}\label{ascissa-curvilinea-gamma}
\dot{x}^2(s)+\dot{y}^2(s) \equiv 1 \,\, .
\end{equation}
By way of conclusion, we obtain the following expression for the tension field $\tau (\varphi_{\gamma})$ of an \emph{isometric} immersion:
\begin{equation}\label{tau-immersione-isometrica}
    \tau (\varphi_{\gamma})= \left(\ddot{x}-(p-1)\frac{\dot{y}^2}{x}+(q-1)\frac{\dot{x}\dot{y}}{y}\right) \frac{\partial}{\partial x}+
    \left(\ddot{y}-(q-1)\frac{\dot{x}^2}{y}+(p-1)\frac{\dot{x}\dot{y}}{x}\right) \frac{\partial}{\partial y}
\end{equation}
We observe that it is possible to check that the tangential component of the tension field in \eqref{tau-immersione-isometrica}, i.e.,
\begin{equation}\label{tau-immersione-isometrica-componente-tangente}
    \tau_t (\varphi_{\gamma})=  \tau (\varphi_{\gamma}) \,\, \cdot \,\, \left [\,\, \dot{x}\,\,\frac{\partial}{\partial x}\, \,+\,
    \,\dot{y}\,\,\frac{\partial}{\partial y} \, \right ]
\end{equation}
always vanishes. Moreover, the normal component of the tension field in \eqref{tau-immersione-isometrica}, i.e.,
\begin{equation}\label{tau-immersione-isometrica-componente-normale}
    \tau_{\nu} (\varphi_{\gamma})=  \tau (\varphi_{\gamma}) \,\, \cdot \,\, \left [\, -\,\dot{y}\,\,\frac{\partial}{\partial x}\, \,+\,
    \,\dot{x}\,\,\frac{\partial}{\partial y} \, \right ]
\end{equation}
is given by:
\begin{equation}\label{tau-immersione-isometrica-componente-normale-esplicita}
    \tau_{\nu} (\varphi_{\gamma})= \ddot{y} \dot{x} - \ddot{x} \dot{y} +(p-1) \frac{\dot{y}}{x} - (q-1) \frac{\dot{x}}{y} \,\, .
\end{equation}
In particular, the equation
\begin{equation}\label{condizio-armonicity-zero}
    \tau_{\nu} (\varphi_{\gamma})\,=\,0
\end{equation}
represents the condition under which $\Sigma_{\gamma}$ is minimal. Now, the next step is to extend this machinery to the biharmonic case. To this purpose, we first have to compute the bitension field $\tau_2(\varphi_{\gamma})$ of our $G-$invariant map $\varphi_{\gamma}\, : \,(\s^{p-1}\times \s^{q-1}\times I,\,g_0)\,\rightarrow \, \R^n$. The reduced bienergy which controls the biharmonicity of the $G-$invariant map $\varphi_{\gamma}\, : \,(\s^{p-1}\times \s^{q-1}\times I,\,g_0)\,\rightarrow \, \R^n$ is:
\begin{equation}\label{bienergiaridotta-immersioni-d02}
    E_2(\varphi_\gamma)= \omega _0 \int_a^b \,\, L_{2,\,\gamma}(s,\,x, \,\dot{x},\,\ddot{x},\,y,\,\dot{y},\,\ddot{y}) \, ds \,\, ,
\end{equation}
where the Lagrangian $L_{2,\,\gamma}$ is:
\begin{equation}\label{lagrangiana-decisiva-d=2}
    L_{2,\,\gamma}(s,\,x, \,\dot{x},\,\ddot{x},\,y,\,\dot{y},\,\ddot{y}) = \frac{1}{2} |\,\tau(\varphi_\gamma)\,|^2 \, \, V(s) \,\,,
\end{equation}
where the volume term $V(s)$ is as in \eqref{volume-zero-d=2} and
\begin{equation}\label{norma-tension-field}
    |\,\tau(\varphi_\gamma)\,|^2 = \tau_x^2 \,+\, \tau_y^2
\end{equation}
can be made explicit by using \eqref{tau-x-esplicita} and \eqref{tau-y-esplicita} (note that we are not yet assuming that $\varphi_\gamma$ is an isometric immersion).
Next, for convenience we set:
\begin{equation}\label{definizione-di-B-x}
    B_{x}\, (s,\,x, \,\dot{x},\,\ddot{x},\,y,\,\dot{y},\,\ddot{y})\,\,=\,\,\,\frac{\partial L_{2,\,\gamma}}{\partial x}\,\,- \, \, \frac{d}{ds} \,  \frac{\partial L_{2,\,\gamma}}{\partial \dot{x}}\,+\,\,\frac{d^2}{ds^2}\,\,\frac{\partial L_{2,\,\gamma}}{\partial \ddot{x}}
\end{equation}
and
\begin{equation}\label{definizione-di-B-y}
    B_{y}\, (s,\,x, \,\dot{x},\,\ddot{x},\,y,\,\dot{y},\,\ddot{y})\,\,=\, \,\,\frac{\partial L_{2,\,\gamma}}{\partial y} \,\,-\,\, \frac{d}{ds} \,  \frac{\partial L_{2,\,\gamma}}{\partial \dot{y}}\,\,+\,\,\frac{d^2}{ds^2}\,\,\frac{\partial L_{2,\,\gamma}}{\partial \ddot{y}}\,\, .
\end{equation}
An argument similar to the one which we used for the tension field leads us to the following expression for the bitension field:
\begin{equation}\label{relazione-tra-bitau-e-eq-eulero-per-immersioni}
    \tau_2 (\varphi_{\gamma})= \tau_{2,\,x}\,\,\, \frac{\partial}{\partial x} \,\,+\,\,\tau_{2,\,y} \,\, \frac{\partial}{\partial y} \,\, ,
\end{equation}
where
\begin{equation}\label{tau2-x}
   \tau_{2,\,x}\,\,(s,\,x, \,\dot{x},\,\ddot{x},\,y,\,\dot{y},\,\ddot{y})\,\,=\,\frac{B_{x}\,\,(s,\,x, \,\dot{x},\,\ddot{x},\,y,\,\dot{y},\,\ddot{y})\,}{V(s)}
\end{equation}
and
\begin{equation}\label{tau2-y}
    \tau_{2,\,y}\,\,(s,\,x, \,\dot{x},\,\ddot{x},\,y,\,\dot{y},\,\ddot{y})\,\,=\,\frac{B_{y}\,\,(s,\,x, \,\dot{x},\,\ddot{x},\,y,\,\dot{y},\,\ddot{y})\,}{V(s)} \,\, .
\end{equation}

Now, as above, we can assume that $\varphi_\gamma$ is an \emph{isometric} immersion. Thus, we use $x(s)=x_0(s),\,y(s)=y_0(s)$ in \eqref{relazione-tra-bitau-e-eq-eulero-per-immersioni} and simplify taking into account \eqref{ascissa-curvilinea-gamma} and its derivative, i.e.,
\begin{equation}\label{ascissa-curvilinea-gamma-derivata}
2\,\dot{x}(s)\, \ddot{x}(s)\,+\, 2\,\dot{y}(s)\, \ddot{y}(s)\, \equiv \, 0 \,\, .
\end{equation}
After a computation (using a software for symbolic computations)
it is possible to express the tangential component of the bitension field, i.e. :
\begin{equation}\label{bitau-immersione-isometrica-componente-tangente}
    \tau_{2,\,t} (\varphi_{\gamma})=  \tau_2 (\varphi_{\gamma}) \,\, \cdot \,\, \left [\,\, \dot{x}\,\,\frac{\partial}{\partial x}\, \,+\,
    \,\dot{y}\,\,\frac{\partial}{\partial y} \, \right ] \,\, ,
\end{equation}
In a similar way, the normal component of the tension field can be computed by means of:
\begin{equation}\label{bitau-immersione-isometrica-componente-normale}
    \tau_{2,\,\nu} (\varphi_{\gamma})=  \tau_2 (\varphi_{\gamma}) \,\, \cdot \,\, \left [\, -\,\dot{y}\,\,\frac{\partial}{\partial x}\, \,+\,
    \,\dot{x}\,\,\frac{\partial}{\partial y} \, \right ] \,\, .
\end{equation}
The two explicit results are:
\begin{eqnarray}\label{bitau-immersione-isometrica-componente-tangente-esplicita}
    \tau_{2,\,t} (\varphi_{\gamma})&=&(p-1)\frac{x^{(3)}}{x}+(q-1)\frac{y^{(3)}}{y}+2(p-1)\frac{{\dot{x}}^2 x^{(3)}}{x}+2(q-1)\frac{\dot{y}^2 y^{(3)}}{y} \nonumber\\&& +2(q-1)\frac{\dot{x} \dot{y} x^{(3)}}{y}+2(p-1)\frac{\dot{x}\dot{y} y^{(3)}}{x}  -3 \ddot{x} x^{(3)} -3 \ddot{y} y^{(3)}\nonumber\\&&
-\frac{(p + q - p q - 1)}{x y} \left(\dot{x} \ddot{y}+\ddot{x} \dot{y} \right)-(p^2 - 5 p + 4)\frac{\dot{y}\ddot{y}}{x^2}+(q^2 - 5 q + 4)\frac{\dot{y}\ddot{y}}{y^2}\nonumber\\&&
+\frac{(p + q - p q - 1)\dot{x}\dot{y}}{xy}\left(\frac{\dot{x}}{x}+\frac{\dot{y}}{y}\right)+(p - 1)^2\frac{\dot{x}\dot{y}^2}{x^3}+(q - 1)^2\frac{\dot{x}^2\dot{y}}{y^3}
   \end{eqnarray}
and
\begin{eqnarray}\label{bitau-immersione-isometrica-componente-normale-esplicita}
    \tau_{2,\,\nu} (\varphi_{\gamma})&=&  \dot{x} y^{(4)}- \dot{y} x^{(4)} +2(p-1)\frac{y^{(3)}}{x}-2(q-1)\frac{x^{(3)}}{y}
     \nonumber\\&&
     2 (p q - p - q + 1)\frac{ \dot{y} \ddot{y}}{x y}+(p^2 - 4 p + 3)\frac{ \dot{x} \ddot{y}}{x^2}-(q^2 - 4 q + 3) \frac{ \dot{y} \ddot{x}}{y^2}
     \nonumber\\&&
    + (q-1)^2\frac{\dot{x}}{y^3}- (p-1)^2\frac{\dot{y}}{x^3} -2(q-1)\frac{\dot{x}\dot{y}^2}{y^3}+2(p-1)\frac{\dot{y}\dot{x}^2}{x^3}\,.
  \end{eqnarray}
  Now a direct check shows that \eqref{bitau-immersione-isometrica-componente-tangente-esplicita} can be written, taking into account \eqref{ascissa-curvilinea-gamma} and its derivative, as
$$
  \tau_{2,\,t} (\varphi_{\gamma})=- \frac{d}{ds}\left[\ddot{y} \dot{x} - \ddot{x} \dot{y} +(p-1) \frac{\dot{y}}{x} - (q-1) \frac{\dot{x}}{y}\right] \left(3\ddot{y} \dot{x} -3 \ddot{x} \dot{y} +(p-1) \frac{\dot{y}}{x} - (q-1) \frac{\dot{x}}{y}\right)
$$
which is, once equaled to zero,  equivalent to \eqref{bitensionecomponentetangente}.
In a similar way, \eqref{bitau-immersione-isometrica-componente-normale-esplicita} can be compared with  \eqref{bitensionecomponentenormale}. This ends the proof of Proposition~\ref{equazioniridottedibiarmonicity} in the case $d=2$. \\

In the general case the calculations are exactly the same, but they just have to start with the following energy densities and volume functions which include \eqref{energy-decisiva-d=2} and \eqref{volume-zero-d=2} as special cases:
\begin{equation}\label{energy-decisiva-d-generale}
     e(\varphi_\gamma)\, =\, \frac{1}{2} \, \left [\,\dot{x}^2(s)+\dot{y}^2(s)\,\,+\,\, \sum_{i=0}^{d-1}\,\,m_i\,\, \frac{[w_{(d,i)}\,(x(s),\,y(s))]^2} {[w_{(d,i)}\,(x_0(s),\,y_0(s))]^2}\,\,\right ]
\end{equation}
and
\begin{equation}\label{volume-zero-d-generale}
     V(s) \,=\, \sqrt { \prod_{i=0}^{(d-1)}\,  \left [\,w_{(d,i)}(x_0(s),y_0(s))\,\right ]^{2\,m_i}  }\,\, ,
\end{equation}
where the functions $w_{(d,i)}(x,y)$ are those defined in \eqref{definizionew_{(d,i)}}
and the corresponding multiplicities $m_i$'s are those given in Table~\ref{table}.

\end{proof}
\begin{remark}\label{software} It is very important to stress the following fact: the previous proof of Proposition~\ref{equazioniridottedibiarmonicity} is geometrically less elegant and requires much longer computations with respect to the original one given in \cite{MOR-G-Invariant-biharm-immersions}. To be more explicit, we can state that a patient reader may carry out by hand the necessary calculations starting from the Lagrangian in \eqref{lagrangiana-decisiva-d=2}. By contrast, the general case, which requires \eqref{energy-decisiva-d-generale} and \eqref{volume-zero-d-generale}, is reasonably possible only by using a suitable computer aided procedure. However, we stress the fact that this approach provides an useful tool because we are able to implement this procedure by means of a reasonably simple computer algorithm: for this reason, we believe that in the future we may be able to treat various other similar, computationally very complicated, related geometric situations.
\end{remark}


\begin{remark}\label{biconservative}
The study of the stress-energy tensor for the
bienergy was initiated  in \cite{GYJ2} and afterwards developed in \cite{LMO}. Its expression is \begin{eqnarray*}
S_2(X,Y)&=&\frac{1}{2}\vert\tau(\varphi)\vert^2\langle X,Y\rangle+
\langle d\varphi,\nabla\tau(\varphi)\rangle \langle X,Y\rangle \\
\nonumber && -\langle d\varphi(X), \nabla_Y\tau(\varphi)\rangle-\langle
d\varphi(Y), \nabla_X\tau(\varphi)\rangle,
\end{eqnarray*}
and it satisfies the condition
\begin{equation}\label{eq:2-stress-condition}
\Div S_2=-\langle\tau_2(\varphi),d\varphi\rangle,
\end{equation}
thus conforming to the principle of a  stress-energy tensor for the
bienergy. If  $\varphi:M\hookrightarrow(N,h)$ is an isometric immersion, then \eqref{eq:2-stress-condition} becomes
$$
(\Div S_2)^{\#}=-\tau_2(\varphi)^{\top}\,,
$$
where $\#$ denotes the musical isomorphism sharp.
This means that isometric immersions with $\Div S_2=0$ correspond to immersions with vanishing tangent part of the corresponding bitension field: such immersions are called \emph{biconservative}. In particular, if we consider a $G-$invariant hypersurface $\Sigma_{\gamma}\, \hookrightarrow \, \R^n$ as in this section, we conclude that $\Sigma_{\gamma}$ is biconservative if and only if \eqref{bitensionecomponentetangente} is satisfied. In the cases $d=1,\,2$ a detailed, qualitative study of such hypersurfaces was carried out in \cite{MOR-AMPA} where, in particular, the existence of proper (not CMC), biconservative cones was proved.
\end{remark}


\section{Open problems and further developments}\label{open-problems}

\begin{problem}\label{generale-esistenza}

 The general existence problem for harmonic maps can be formulated as follows (\cite{ES}): given a homotopy class of maps between two Riemannian manifolds, does it admit a \emph{harmonic} representative? For instance, J. Eells and J.C. Wood proved (see \cite{JEELLE1} and references therein) that there exist no harmonic map $\varphi \, : \, T^2\,\rightarrow \, \s^2$ of degree $1$. Does there exist a biharmonic map $\varphi \, : \, T^2\,\rightarrow \, \s^2$ of degree $1$? More generally: given a homotopy class of maps between two Riemannian manifolds, does it admit a \emph{proper biharmonic} representative?
\end{problem}

\begin{problem}\label{generale-esistenza-Dirichlet}

Problem~\ref{generale-esistenza} admits a natural generalization to the case of manifolds with boundary (Dirichlet's problem). In particular, we pointed out in Remark~\ref{Jager-Kaul} that a certain Dirichlet's problem for maps from the closed, Euclidean unit $4$-ball $B^4$ to $\s^4$ does not admit rotationally symmetric harmonic solutions, while, by contrast, there exist a proper biharmonic solution. In this order of ideas, for instance, we know that any harmonic map $\varphi \, : \, B^m\,\rightarrow \, N$, $m \geq2$, which is constant on the boundary is constant on the whole $B^m$. \emph{Does the same property holds for proper biharmonic maps?} (a partial result in this direction was proved in \cite{Baird-Fardoun-2010}).
\end{problem}

\begin{problem}\label{trave-problem}

In the introduction we pointed out that the motion of a beam can be determined by studying the fourth order equation \eqref{trave}. It would be interesting to develop this fact in a variational, riemannian geometric context and compare the outcome with the theory of biharmonic maps.
\end{problem}

\begin{problem}\label{m=4}

Due to the conformal invariance of the energy functional when the domain manifold is $2$-dimensional, it is well-known that the study of harmonic maps from surfaces is particularly rich and interesting (see \cite{JEELLE1}, \cite{JEELLE2}). The results and examples of Section~\ref{mappe-tra-modelli} suggest that the domain dimension $m=4$ may be special in the study of biharmonic maps: it would be interesting to clarify and develop this idea.
\end{problem}

\begin{problem}\label{G-invarianti-immersioni-in-sfere}

We have a rather complete knowledge of the possible isometric actions on the Euclidean sphere which enable us to produce families of $G-$invariant, cohomogeneity one hypersurfaces. In this context, suitable computer aided algorithms could be used to produce the equations which govern biharmonicity.
\end{problem}

\begin{problem}\label{G-invarianti-immersioni-in-euclideo-cohomogeneity-due}

The Chen conjecture does not concern hypersurfaces only: for instance, one could study $G-$invariant, cohomogeneity two biharmonic submanifolds of $\R^n$. In this context, one may try to apply the reduction methods of this paper.
\end{problem}


\begin{thebibliography}{99}

\bibitem{BMBib} {\it The Harmonic Maps Bibliography}.\\
{\tt http://people.bath.ac.uk/masfeb/harmonic.html}

\bibitem{Maeta} K.~Akutagawa, S.~Maeta. Biharmonic properly immersed
submanifolds in Euclidean spaces. {\em Geom. Dedicata} 164 (2013), 351+--355.

\bibitem{BackDoCarmoHsiang} A.~Back, M.P. do Carmo, W.-Y. Hsiang. On some fundamental equations of equivariant Riemannian geometry. Tamkang J. Math. 40 (2009), 343--376.

\bibitem{Baird-et-al} P.~Baird, A.~Fardoun,  S.~ Ouakkas. Conformal and semi-conformal biharmonic maps. {\it Ann. Global Anal. Geom.} 34 (2008), 403--414.

\bibitem{Baird-Fardoun-2010} P.~Baird, A.~Fardoun,  S.~ Ouakkas. Liouville-type theorems for biharmonic maps between Riemannian manifolds. {\it Adv. Calc. Var.} 3 (2010), 49--68.

\bibitem{PBJCW} P.~Baird, J.C.~Wood. {\it Harmonic Morphisms between Riemannian
Manifolds}. Oxford Science Publications, (2003).

\bibitem{Montaldobis} A.~Balmu\c s, S.~Montaldo, C.~Oniciuc. {Biharmonic maps between warped pro\-duct manifolds}. {\it  J. Geom. Phys.}, 57 (2007), 449--466.

\bibitem{BMO13} A. Balmu\c s, S.~Montaldo, C.~Oniciuc.
Biharmonic PNMC submanifolds in spheres. {\em Ark. Mat.} 51 (2013), 197--221.

\bibitem{BMO1} A.~Balmu\c s, S.~Montaldo, C.~Oniciuc. Classification results for biharmonic submanifolds in spheres. {\it Israel J. Math.}, 168 (2008), 201--220.

\bibitem{BMO2} A.~Balmu\c s, S.~Montaldo, C.~Oniciuc. Biharmonic hypersurfaces in 4-dimensional space forms. {\it Math. Nachr.}, 283 (2010), 1696--1705

\bibitem{BDG} E.~Bombieri, E.~De Giorgi, E.~Giusti. Minimal cones and the Bernstein problem. {\em Invent. Math.} 7 (1969), 243--268.


\bibitem{CMO02} R.~Caddeo, S.~Montaldo, C.~Oniciuc.
Biharmonic submanifolds in spheres. Israel J. Math. 130 (2002), 109--123.

\bibitem{Chen-book} B.-Y.~Chen. {\em Total mean curvature and submanifolds of finite type}, second edition. Serier in Pure Mathematics, Vol. 27 (2015).

\bibitem{Chen04} B.-Y.~Chen. Recent developments of biharmonic conjecture and modified biharmonic conjectures. arXiv:1307.0245, to appear in Proceedings of PADGE-2012.

\bibitem{Chen} B.-Y.~Chen. Some open problems and conjectures on submanifolds of finite type. {\em Soochow J. Math.} 17 (1991), 169--188.

\bibitem{C84} B-Y.~Chen.
{\em Total Mean Curvature and Submanifolds of Finite Type.} Series in Pure Mathematics 1. World Scientific Publishing Co., Singapore, 1984.

\bibitem{Cooper} M.K.~Cooper. Critical $O(d)-$equivariant biharmonic maps. {\em Calc. Var.} 54 (2015), 2895--2919.

\bibitem{Corradi} L.~Corradi Dell'Acqua. {\it Meccanica delle strutture}. McGraw-Hill Italia (1992).

\bibitem{Defever} F. Defever. Hypersurfaces of $E^4$ with harmonic mean curvature vector. {\em Math. Nachr.}196 (1998), 61--69.

\bibitem{dimitric} I.~Dimitric. Submanifolds of $E^m$ with harmonic mean curvature vector.
{\em Bull. Inst. Math. Acad. Sinica} 20 (1992), 53--65.

\bibitem{DNF} B.A.~Dubrovin, A.T.~ Fomenko, S.P.~Novikov. {\em Modern geometry -- methods and applications. Part II. The geometry and topology of manifolds.}  Graduate Texts in Mathematics, 104. Springer-Verlag, New York, 1985.

\bibitem{EL83} J.~Eells, L.~Lemaire. {\it  Selected topics in harmonic maps.} CBMS Regional Conference Series in Mathematics, 50. American Mathematical Society, Providence, RI, 1983.

\bibitem{JEELLE1} J.~Eells, L.~Lemaire. {A report on harmonic maps}.
 {\it Bull. London Math. Soc.}, 10 (1978), 1--68.

\bibitem{JEELLE2} J.~Eells, L.~Lemaire. { Another report on harmonic maps}.
 {\it Bull. London Math. Soc.},  20 (1988), 385--524.

\bibitem{ER} J.~Eells, A.~Ratto. {Harmonic maps and minimal immersions with symmetries - Methods of ordinary differential equations applied to elliptic variational problems}.
{\it Ann. Math. Studies} 133, Princeton University Press (1993), 1--228.

\bibitem{ES} J.~Eells, J.H.~Sampson. {Harmonic mappings of Riemannian
manifolds}. {\em Amer. J. Math.} 86 (1964), 109--160.

\bibitem{Fu2014A}  Y.~ Fu. Biharmonic hypersurfaces with three distinct principal curvatures in Euclidean 5-space. {\em J. Geom. Phys.} 75 (2014), 113--119.

\bibitem{Fu2014}  Y.~ Fu. Biharmonic hypersurfaces with three distinct principal curvatures in Euclidean space.  {\em Tohoku Math. J.} in press.

\bibitem{GW} R.E.~Greene, H.~Wu.
Function theory on manifolds which possess a pole.
{\it Lecture Notes in Mathematics} 699. Springer, Berlin, 1979.

\bibitem{HasVla95} Th.~Hasanis, Th.~Vlachos. Hypersurfaces in $E^4$ with harmonic curvature vector field. {\em Math. Nachr.} 172 (1995), 145--169.

\bibitem{HM} P.~Hornung, R.~Moser.
Existence of equivariant biharmonic maps. {\em Int. Math. Res. Not.}, online first 2015.

\bibitem{Hsiang} W.-Y. Hsiang. Generalized rotational hypersurfaces of constant mean curvature in the Euclidean space I. {\em J. Diff. Geom.} 17 (1982), 337--356.

\bibitem{HL} W.-Y. Hsiang, B.~Lawson. Minimal submanifolds of low cohomogeneity. {\em J. Diff. Geom.} 5 (1971), 1--38.

\bibitem{JK} W.~J\"{a}ger, H.~Kaul. Rotationally symmetric harmonic maps from a ball into a sphere and the regularity problem for weak solutions of elliptic systems. {\it J. Reine Angew. Math.} 343 (1983), 146--161.

\bibitem{Jiang} G.Y.~Jiang. {2-harmonic maps and their first and second variation formulas}.
{\it Chinese Ann. Math. Ser. A 7},  7 (1986), 130--144.

\bibitem{GYJ2} G.Y.~Jiang. The conservation law for 2-harmonic
maps between Riemannian manifolds. {\em Acta Math. Sinica} 30
(1987), 220--225.

\bibitem{LMO} E.~Loubeau, S.~Montaldo, C.~Oniciuc. The stress-energy tensor for biharmonic maps. {\em Math. Z.} 259 (2008), 503--524.

\bibitem{LM08} E.~Loubeau, S.~Montaldo. Biminimal immersions. {\em Proc. Edinb. Math. Soc.} 51 (2008), 421--437.

\bibitem{Loubeau-et-al} E.~Loubeau, Y.-L.~Ou. Biharmonic maps and morphisms from conformal mappings. {\it Tohoku Math. J.}  62 (2010), 55--73.

\bibitem{Montaldo} S.~Montaldo, C.~Oniciuc. {A short survey on biharmonic maps between riemannian manifolds}. {\it Rev. Un. Mat. Argentina}, 47 (2006), 1--22.

\bibitem{MOR-AMPA} S.~Montaldo, C.~Oniciuc, A.~Ratto. Proper biconservative immersions into the Euclidean space.  {\em Ann. Mat. Pura Appl.}, in press.

\bibitem{MOR-G-Invariant-biharm-immersions} S.~Montaldo, C.~Oniciuc, A.~Ratto. On cohomogeneity one biharmonic hypersurfaces into the Euclidean space, submitted.

\bibitem{MOR-JJAA} S.~Montaldo, C.~Oniciuc, A.~Ratto. Rotationally symmetric biharmonic maps between models {\em J. Math.Anal.Appl.} 431 (2015), 494-508.

\bibitem{MR} S.~Montaldo, A.~Ratto. A General Approach to Equivariant Biharmonic Maps. {\em Med. J. Math.} 10 (2013), 1127--1139.

\bibitem{MR-ellissoidi} S.~Montaldo, A.~Ratto. Biharmonic submanifolds into ellipsoids. {\em Monatshefte f\"{u}r Mathematik} 176 (2015), 589--601.

\bibitem{MST} S.~Myers, N.~Steenrod. The group of isometries of a Riemannian manifold. {\em Ann. of Math.} 40 (1939), 400--416.

\bibitem{NUG} N.~Nakauchi, H.~Urakawa, S.~Gudmundsson.
Biharmonic maps into a Riemannian manifold of non-positive curvature.
{\em Geom. Dedicata} 169 (2014), 263+--272.

\bibitem{O02} C.~Oniciuc.
Biharmonic maps between Riemannian manifolds. An. Stiint. Univ. Al.I. Cuza Iasi Mat (N.S.) 48 (2002), 237--248.

\bibitem{O10} Y.-L.~Ou.
Biharmonic hypersurfaces in Riemannian manifolds. {\em Pacific J. Math. }248 (2010), 217--232.

\bibitem{Pedrosa} R.H.L.~Pedrosa. Riemannian transformation groups. In Lectures on
isometric actions, XV Esc. Geom. Diferencial, pages 5-75. Inst. Mat. Pura
Apl. (IMPA), Rio de Janeiro, 2008.

\bibitem{Petersen} P.~Petersen. Riemannian geometry. {\em Graduate texts in Mathematics, Springer} 171 (2006).

\bibitem{WangOuYang} Z. -P.~ Wang, Y. -L.~ Ou and H. -C.~ Yang. Biharmonic maps from a 2-sphere. {\em J. of Geom.
Phys.} 77 (2014), 86--96.

\bibitem{Xin} Y.~Xin. {Geometry of harmonic maps}. {\it Progress in Nonlinear Differential Equations and their Applications}, Birkh\"{a}user Boston Inc., Boston (1996).
\end{thebibliography}
\end{document}